\newtheorem{theorem}{Theorem}[section]
\newtheorem{proposition}[theorem]{Proposition}
\newtheorem{claim}[theorem]{Claim}
\theoremstyle{definition}
\newtheorem{definition}[theorem]{Definition}
\theoremstyle{remark}
\newtheorem{remark}[theorem]{Remark}
\numberwithin{equation}{section}
\begin{document}

\title{Composite tunnel number one genus two handlebody-knots}

\author{Mario Eudave-Mu\~{n}oz}
\address{Instituto de Matem\'{a}ticas, UNAM, Circuito Exterior, Ciudad Universitaria, 04510 M\'{e}xico, D.F., Mexico, and CIMAT, Guanajuato, Mexico}
\curraddr{}
\email{mario@matem.unam.mx}
\thanks{The first author is partially supported by PAPIIT-UNAM grant IN109811}

\author{Makoto Ozawa}
\address{Department of Natural Sciences, Faculty of Arts and Sciences, Komazawa University, 1-23-1 Komazawa, Setagaya-ku, Tokyo, 154-8525, Japan}
\email{w3c@komazawa-u.ac.jp}
\thanks{The second author is partially supported by Grant-in-Aid for Scientific Research (C) (No. 23540105), The Ministry of Education, Culture, Sports, Science and Technology, Japan}

\subjclass[2000]{Primary 57M25}

\date{}

\dedicatory{This paper is dedicated to the 70th birthday of Professor Fico Gonz\'{a}lez Acu\~{n}a.}

\keywords{2-decomposing sphere, tangle decomposition, tunnel number, unknotting tunnel, Heegaard splitting, handlebody-knot}

\begin{abstract}
We characterize composite tunnel number one genus two handlebody-knots.
\end{abstract}

\maketitle

\section{Introduction}
It is a fundamental theorem in Knot Theory that any non-splittable link can be uniquely decomposed into prime links.
This theorem was proved for knots by Schubert (\cite{S}) and for links by Hashizume (\cite{H}).

It was expected that two generator knots can not be decomposed into prime knots since two generator knots are most ``simple'' among all knots.
Indeed, Norwood showed that two generator knots are prime (\cite{N}).
In contrast to Norwood's algebraic proof, Scharlemann gave a geometric proof showing that tunnel number one knots are prime (\cite{Sch}).
In this direction, Scharlemann showed that tunnel number one knots are 2-string prime (\cite{Sch}), and Gordon--Reid showed that tunnel number one knots are $n$-string prime for all $n$ (\cite{GR}).
Although Li gave a counterexample to the Rank versus Genus Conjecture for closed orientable hyperbolic 3-manifolds (\cite{L}), it remains to be unknown whether there exists a knot in $S^3$ such that the rank of the fundamental group of the knot exterior is less than the Heegaard genus of it (e.g. \cite[Question 2]{L}).

On the other hand, there are two generator, or tunnel number one links which are not prime.
Jones showed that composite two-generator links have a Hopf link summand (\cite{J}).
Furthermore, Morimoto showed that tunnel number one links are composite if and only if they are connected sums of 2-bridge knots and the Hopf link (\cite{M}, cf. \cite{EU}).
In this direction, Gordon--Reid showed that $n$-string composite tunnel number one links have a Hopf tangle summand (\cite{GR}, cf. \cite{GOT}).

Suzuki generalized Schubert--Hashizume's result to spatial graphs by proving that every connected graph embedded in $S^3$ can be split along spheres meeting the graph in 1 or 2 points to obtain a unique collection of prime embedded graphs together with some trivial graphs (\cite{Su}).
Motohashi and Matveev--Turaev proved a prime decomposition theorem for $\theta_n$-curves by decomposing spheres which intersect each edge in one point (\cite{Mo1}, \cite{MT}).
Motohashi also proved a prime decomposition theorem for handcuff graphs by decomposing spheres which intersects the graph in exactly three points (\cite{Mo2}).

In this direction, we consider a genus two handlebody embedded in the 3-sphere whose exterior admits a genus three Heegaard splitting, and 
which has a decomposing sphere intersecting the handlebody in two meridian disks.

\section{Results}

We call an embedding (or the image of it) of a handlebody $V$ into $S^3$ a {\em handlebody-knot}.
We denote the exterior $S^3-\text{int}V$ of $V$ by $E(V)$.

The following definition on decomposing spheres for handlebody-knots was given by Ishii, Kishimoto and Ozawa in \cite{IKO}.

\begin{definition}
A 2-sphere $S$ embedded in $S^3$ is an {\em $n$-decomposing sphere} for a handlebody-knot $V$ if the following conditions hold.
\begin{enumerate}
\item $S\cap V$ consists of $n$ essential disks of $V$.
\item $S\cap E(V)$ consists of an incompressible and not boundary-parallel planar surface in $E(V)$.
\end{enumerate}
\end{definition}

We say that a handlebody-knot $V$ is {\em reducible} if $E(V)$ is boundary-reducible, i.e. $\partial V=\partial E(V)$ is compressible in $E(V)$, and $V$ is {\em irreducible} if it is not reducible (note that $E(V)$ is always irreducible in the sense that any sphere bounds a 3-ball).
It follows that if $V$ has a 1-decomposing sphere, then $V$ is reducible.
Conversely, Tsukui showed that if $V$ is reducible, then $V$ has a 1-decomposing sphere in the case that the genus of $V$ is two (\cite{T}, cf. \cite{KO}).

The decomposition by 1-decomposing spheres is unique for a genus two handlebody-knot (\cite{T1}) and for a trivial handlebody-knot, i.e. one standardly embedded in $S^3$. The uniqueness is not known for a genus $g\ge 3$ handlebody-knot.
Ishii, Kishimoto and the second author showed that a handlebody-knot whose exterior is boundary-irreducible has a unique maximal unnested set of knotted handle decomposing
spheres up to isotopies and annulus-moves (\cite{IKO}).
Moreover,  Koda and the second author showed the same uniqueness theorem for arbitrary handlebody-knots (\cite{KO}).

When the genus $g$ of $V$ is greater than one, the spine of a handlebody-knot $V$ can not be uniquely determined.
However, if we specify $g-1$ mutually non-parallel essential disks in $V$, then the spine without degree one vertices can be uniquely determined.
For the case that the genus of $V$ is equal to two, if $V$ has a 2-decomposing sphere $S$, then by the definition (1), $S\cap V$ gives two mutually parallel essential disks in $V$.
Therefore, the spine of $V$ is uniquely determined by a theta-curve or handcuff graph depending on whether the disks of $S\cap V$ are non-separating or separating in $V$.

Let $\{\gamma_1,\ldots,\gamma_t\}$ be a set of mutually disjoint arcs properly embedded in $E(V)$.
We call the set $\{\gamma_1,\ldots,\gamma_t\}$ an {\em unknotting tunnel system} for $V$ if $E(V)-\text{int}N(\gamma_1\cup\ldots\cup\gamma_t)$ is a handlebody.
The {\em tunnel number} of $V$ is the minimal number of arcs among all unknotting tunnel systems.
We say that a handlebody-knot is {\em trivial} if the tunnel number is zero.
By Waldhausen's theorem, any two genus $g$ trivial handlebody-knots are equivalent up to isotopy of $S^3$ (\cite{W}).
When the tunnel number is one, we abbreviate $\{\gamma\}$ to $\gamma$ and call $\gamma$ an {\em unknotting tunnel}.
From the point of view of the tunnel number, the first considerable class of handlebody-knots is the class of tunnel number one handlebody-knots.
For a spatial graph $\Gamma$, the exterior, unknotting tunnel (system), and tunnel number are defined by that for $N(\Gamma)$.

Let $\Gamma$ be a connected spatial graph in $S^3$, and $S$ a 2-sphere which intersects $\Gamma$ transversely, hence it is disjoint from the vertices of $\Gamma$.
Then $(S^3,\Gamma)$ is decomposed by $S$ into two tangles $(B_1,T_1)$ and $(B_2,T_2)$.
By a tangle we mean a pair $(B,T)$ consisting of a 3-ball $B$ and a properly embedded graph $T$ in $B$, that is, $\partial B \cap T$ consists of the degree one vertices of $T$.
We say that a tangle $(B,T)$ is {\em free} if $B-\text{int}N(T)$ is a handlebody.
A tangle $(B,T)$ is {\em essential} if $\partial B\cap E(T)$ is incompressible and not boundary-parallel in $E(T)$, where $E(T)=B-\text{int}N(T)$ is the exterior of $T$.
We say that a tangle decomposition $(S^3,\Gamma)=(B_1,T_1)\cup_S(B_2,T_2)$ is {\em essential} if both tangles $(B_1,T_1)$ and $(B_2,T_2)$ are essential.
It follows that if $S$ is an essential tangle decomposing sphere for $\Gamma$, then $S$ is also an $n$-decomposing sphere for the handlebody-knot $V=N(\Gamma)$, where $n=|S\cap \Gamma|$.

Let $\Gamma$ be a theta-curve $\theta$ or a handcuff graph $\phi$ embedded in $S^3$, and $e$ be an edge of $\theta$ or the cut edge of $\phi$.
Let $P$ be a 2-sphere which intersects $\Gamma$ in four points or three points of $e$ depending on whether $\Gamma$ is $\theta$ or $\phi$.
Then $P$ bounds a tangle $(B,T)$, where $T$ consists of a cycle $C$ attached with two or one edges, and an arc $\alpha$.
We say that such a tangle $(B,T)$ is a {\em Hopf tangle with two edges} or a {\em Hopf tangle with one edge} if $T-\alpha$ is contained in a properly embedded disk $D$ in $B$, and $C$ bounds a subdisk $D'$ of $D$ which intersects $\alpha$ in one point, and $\alpha$ is an unknotted arc in $B$.

\begin{theorem}\label{main}
Let $V$ be an irreducible tunnel number one genus two handlebody-knot in $S^3$.
If $V$ has a 2-decomposing sphere $S$, then there exists a spine $\Gamma$ of $V$ which satisfies one of the following conditions.
\begin{enumerate}
\item $\Gamma$ is a theta-curve graph which is decomposed by $S$ into a tunnel number zero theta-curve graph $\theta$ and a $(1,1)$-knot $K$.
\item $\Gamma$ is a handcuff graph which is decomposed by $S$ into a tunnel number zero handcuff graph $\phi$ and a $(1,1)$-knot $K$, where $S$ intersects $\phi$ in the cut edge of $\phi$.
\item $\Gamma$ is a theta-curve graph which is decomposed by $S$ into a theta-curve graph $\theta$ and a 2-bridge knot $K$, where the connected sum $\Gamma=\theta\#_S K$ is done at an edge $e$ of $\theta$.
The theta-curve $\theta$ is decomposed by a 2-sphere $P$ into an essential free 2-string tangle and a Hopf tangle with two edges, where $P$ intersects $\theta$ in the edge $e$.
\item $\Gamma$ is a handcuff graph which is decomposed by $S$ into a handcuff graph $\phi$ and a 2-bridge knot $K$, where the connected sum $\Gamma=\phi\#_S K$ is done at the cut edge $e$ of $\phi$.
The handcuff graph $\phi$ is decomposed by a 2-sphere $P$ into an essential free tangle and a Hopf tangle with one edge, where $P$ intersects $\phi$ in the edge $e$.
\end{enumerate}
\end{theorem}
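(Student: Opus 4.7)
\emph{Setup.} The plan is to exploit the tunnel-number-one hypothesis by fixing an unknotting tunnel $\gamma \subset E(V)$, so that $W := E(V) - \text{int}\,N(\gamma)$ is a handlebody and $H := V \cup N(\gamma)$ is a genus three handlebody; together $(H, W)$ form a genus three Heegaard splitting of $S^{3}$. The 2-decomposing sphere $S$ meets $V$ in two essential disks $D_1, D_2$, which I first argue must be parallel in $V$: otherwise two disjoint essential non-parallel disks in a genus two handlebody, combined with irreducibility of $V$, would force a 1-decomposing sphere via an innermost-disk argument, contradicting irreducibility. Hence $A := S \cap E(V)$ is an essential annulus in $E(V)$, and I can choose the spine $\Gamma$ so that it crosses $D_1, D_2$ transversely once each along a common edge $e$; in the handcuff case, $e$ must then be the cut edge.

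\emph{Reducing the tunnel intersection.} I next isotope $S$ to minimize $|S \cap \gamma|$. Since $A$ is incompressible in $E(V)$ and hence in the handlebody $W$, an outermost-arc argument on $A$ inside $W$ removes any inessential intersections. The proof then branches on whether the resulting minimum is zero or positive.

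\emph{Case analysis.} If $S \cap \gamma = \emptyset$, the tunnel lies in one 3-ball bounded by $S$, say $B_1$. On the opposite side, $B_2 \cap W$ is a handlebody cut off from $W$ by a portion of the essential annulus $A$, so the tangle $(B_2, T_2)$ is free. On the $B_1$ side, the tangle together with $\gamma$ yields a genus two Heegaard-like structure exhibiting the closed arc as a $(1,1)$-knot $K$. Whether conclusion (1) or (2) holds is then determined by whether $D_1, D_2$ are non-separating or separating in $V$. If $S \cap \gamma \neq \emptyset$, I plan to promote $S$ to a finer decomposition: a detailed analysis of the intersection pattern using an outermost-arc argument on a meridian disk of $N(\gamma)$ produces a surgery $P$ of $S$ along a sub-arc of $\gamma$ which meets $\Gamma$ only in $e$, cuts off a Hopf tangle on one side, and leaves an essential free tangle on the other, while the residue of $S$ exhibits a 2-bridge summand $K$, yielding conclusion (3) (theta) or (4) (handcuff).

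\emph{Main obstacle.} The critical difficulty lies in the second case: constructing the auxiliary sphere $P$ and verifying that the resulting tangles are precisely a Hopf tangle together with an essential free tangle. This step demands a careful combinatorial analysis of the intersections of $A$ with a system of meridian disks of the handlebody $W$, combined with spine adjustments (edge slides of $\Gamma$) to bring the Hopf summand into canonical form. Coordinating these moves while preserving essentiality of all surfaces involved is the technical heart of the proof.
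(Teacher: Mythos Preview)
Your overall strategy matches the paper's: fix an unknotting tunnel $\gamma$, minimise $|S\cap\gamma|$, and then split into the cases $|S\cap\gamma|=0$ and $|S\cap\gamma|>0$. The genuine gap is that you treat the minimisation step as routine. You write that an ``outermost-arc argument on $A$ inside $W$ removes any inessential intersections'' and then branch on whether the minimum is ``zero or positive''. But an outermost arc on a compressing disk of $W$ does \emph{not} automatically reduce $|S\cap\gamma|$: the outermost disk can have both endpoints on the same cocore $\alpha_i$ or $\beta_j$ with its boundary arc running over $\partial E(V)$, and in that situation no isotopy of $S$ through the disk lowers the count. The paper spends its entire Section~3 on this point, running a nine-case analysis of outermost arcs, invoking Scharlemann cycles and lens-space/essential-torus obstructions, and finally proving (Claims~3.4--3.9) that the minimal intersection number is at most~$1$. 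Without this bound your ``positive'' branch is not a single case but an unbounded family, and your construction of $P$ has no starting point.

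A second, related gap is in the $|S\cap\gamma|=1$ case. You propose to build $P$ by ``surgery of $S$ along a sub-arc of $\gamma$'', but the paper's construction of $P$ depends on a specific piece of structure obtained during the minimisation argument: the outermost disk that survives (Claim~3.3) produces an annulus in $E(V)$ carrying an end of $\gamma$ and identifying a cycle $w\subset\Gamma$ parallel into $\widehat S$. It is this annulus that, after boundary-compressing $\widetilde S$, lets one slide the cycle onto $S$ and read off the Hopf-tangle summand and the $2$-bridge presentation of $K$. Your sketch does not produce this annulus, so the identification of the Hopf tangle and the $2$-bridge knot is unsupported. (Minor point: in the disjoint case you should also argue why $\gamma$ sits in the ball containing the knotted arc; the paper gets both sides to be genus-two handlebodies from incompressibility of $\widehat S$ in $W$, which is what makes the $\theta$/$\phi$ side tunnel-number zero and feeds the $(1,1)$ argument on the other side.)
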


See Figure 1 for a sketch of the four types of a spine $\Gamma$. Note that if a graph $\Gamma$ satisfies one of the conclusions of the theorem, then in fact it has tunnel number one. Note also that a genus two handlebody-knot could have different spines satisfying different conclusions of the theorem.

Explicit examples of genus two handlebody-knots admitting 2-decomposing spheres can be found in the table given by Ishii, Kishimoto, Moriuchi and Suzuki \cite{IKMS}. They have produced a table of genus two handelbody-knots which have diagrams with at most six crossings. All these handlebody-knots have tunnel number one, and it follows by inspection that  the handlebody knots $5_4$ and $6_{14}$ satisfy condition (1) of the theorem, while $6_{15}$ and $6_{16}$ satisfy condition (2) of the theorem.

\begin{remark}
In the proof of Theorem \ref{main}, we will show that there exists an unknotting tunnel $\gamma$ for $V$ which intersects $S$ in at most one point.
If $|\gamma\cap S|=0$, then we have a conclusion (1) or (2).
If $|\gamma\cap S|=1$, then we have a conclusion (3) or (4).
\end{remark}

\begin{figure}[htbp]
	\begin{center}
	\begin{tabular}{cc}
	\includegraphics[trim=0mm 0mm 0mm 0mm, width=.3\linewidth]{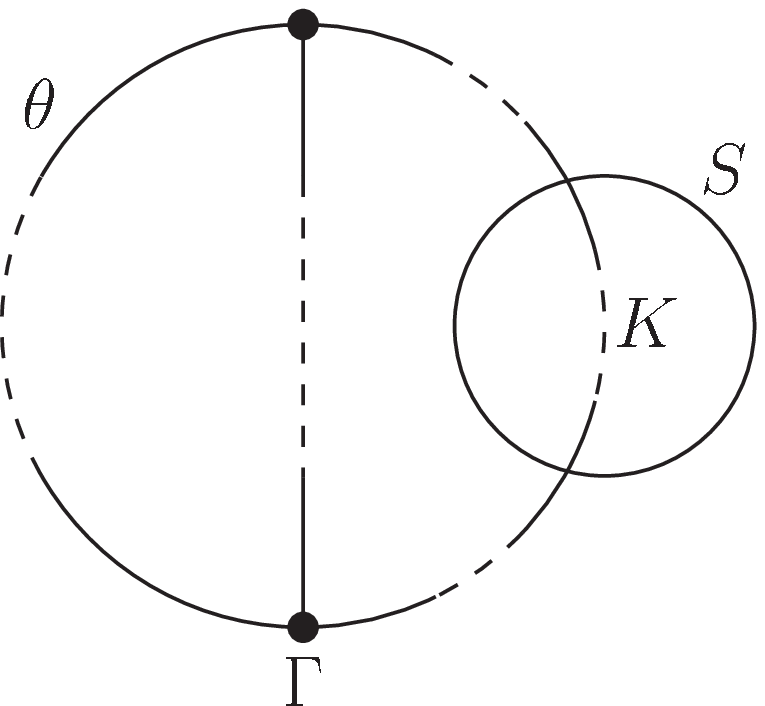}&
	\includegraphics[trim=0mm 0mm 0mm 0mm, width=.5\linewidth]{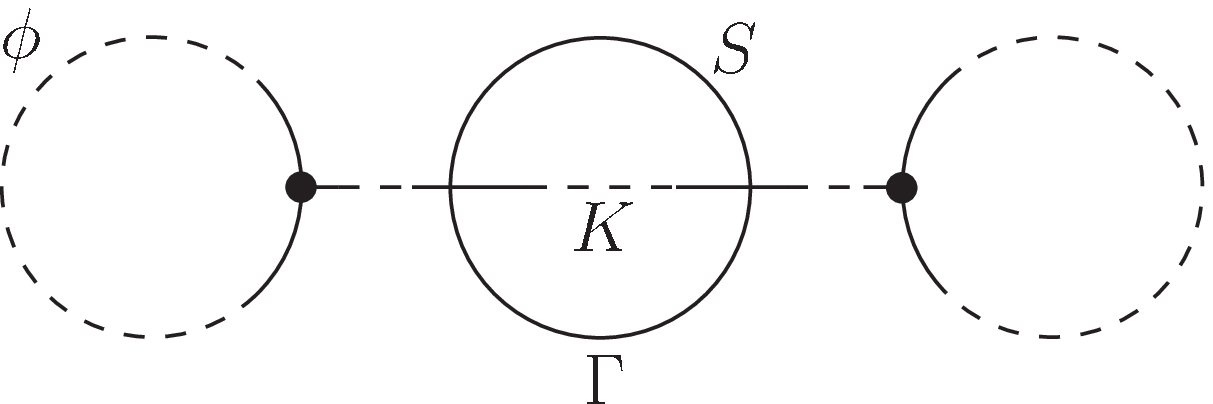}\\
	(1) & (2)\\
	 & \\
	\includegraphics[trim=0mm 0mm 0mm 0mm, width=.45\linewidth]{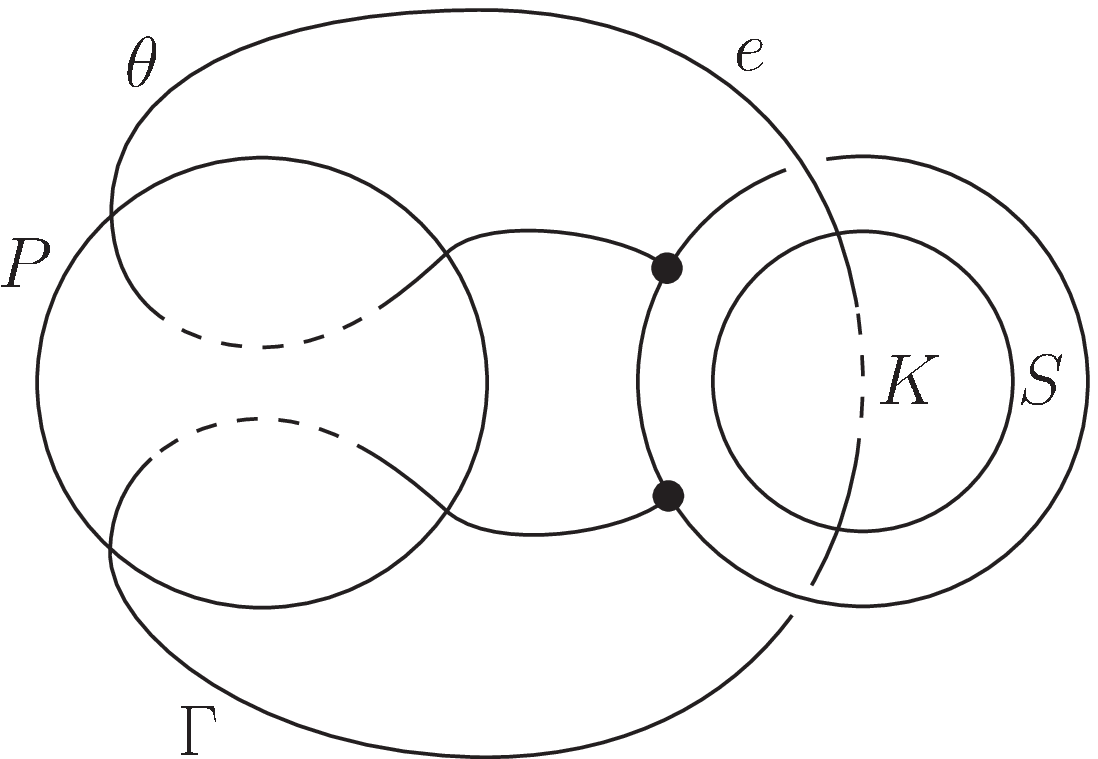}&
	\includegraphics[trim=0mm 0mm 0mm 0mm, width=.45\linewidth]{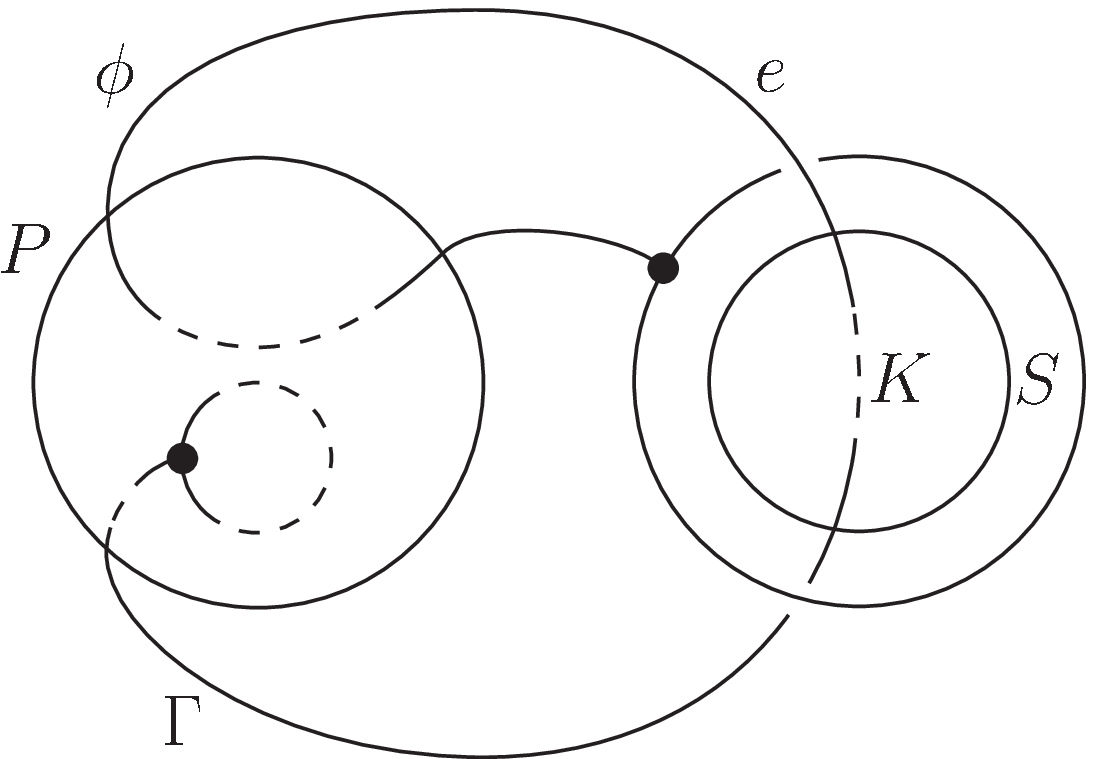}\\
	(3) & (4)\\
	\end{tabular}
	\end{center}
	\caption{Four types of a spine $\Gamma$}
	\label{conf}
\end{figure}

\section{Main proofs}

In this section we prove the following:

\begin{proposition}\label{mainpro} Let $V$ be an irreducible genus two handlebody-knot with has an unknotting tunnel $\gamma$. Suppose that $V$ admits 2-decomposing spheres. Then there is a 2-decomposing sphere $S$ for $V$ which is disjoint from $\gamma$ or intersects it in one point.
\end{proposition}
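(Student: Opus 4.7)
The plan is to minimize $|S\cap\gamma|$ over 2-decomposing spheres $S$ for $V$, with $\gamma$ chosen in its tunnel-isotopy class to be transverse to $S$. Let $n=|S\cap\gamma|$ denote the minimum and assume, toward contradiction, that $n\ge 2$. Write $W=E(V)-\text{int}\,N(\gamma)$; since $\gamma$ is an unknotting tunnel, $W$ is a genus-three handlebody. By the definition of a 2-decomposing sphere, the annulus $P=S\cap E(V)$ is essential in $E(V)$, with $\partial P=\partial D_1\cup\partial D_2$ where $D_1,D_2=S\cap V$. The restriction $P^{\ast}=P\cap W$ is then a planar surface properly embedded in $W$ with $n+2\ge 4$ boundary components: the curves $\partial D_1,\partial D_2$ on $\partial V\cap\partial W$, together with $n$ meridian circles of $N(\gamma)$ on the side annulus $A_\gamma$ of $\partial W=(\partial V-(d_1\cup d_2))\cup A_\gamma$, where $d_1,d_2$ are the feet of $\gamma$. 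Since $W$ is a handlebody and $P^{\ast}$ is not a disk, the standard fact that incompressible and boundary-incompressible properly embedded surfaces in a handlebody are disks implies that $P^{\ast}$ is compressible or boundary-compressible in $W$.

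In the compressible case, let $D\subset W$ be a compressing disk for $P^{\ast}$. Since $P$ is incompressible in $E(V)$ and $D$ is a disk in $E(V)$ with $\partial D\subset P$, the curve $\partial D$ bounds a subdisk $D_P$ in $P$; essentiality of $\partial D$ in $P^{\ast}$ forces $D_P$ to contain at least one point of $\gamma\cap P$. The surgered sphere $S'=(S-\text{int}\,D_P)\cup D$ satisfies $S'\cap V=D_1\cup D_2$ and $|S'\cap\gamma|=n-|D_P\cap\gamma|<n$. The new annulus $P'=(P-D_P)\cup D$ is incompressible in $E(V)$: a compressing disk for $P'$ would, together with $D_1$ or $D_2$, yield a compressing disk for $\partial V$ in $E(V)$, contradicting the irreducibility of $V$. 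Verifying that $P'$ is also not boundary-parallel---so that $S'$ is a 2-decomposing sphere giving the desired contradiction to minimality---uses the hypothesis that $P$ is not boundary-parallel, together with a further argument exploiting the product region between a hypothetically parallel $P'$ and $\partial V$ to obtain yet another intersection-reducing isotopy.

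In the boundary-compressible case, let $D$ be a boundary-compressing disk with $\partial D=\alpha\cup\beta$, $\alpha\subset P^{\ast}$ essential and $\beta\subset\partial W$. The argument splits by the location of $\beta$. If $\beta\subset\partial V$ with $\alpha$ a spanning arc of the annulus $P$, then $D$ is a boundary-compressing disk for the essential annulus $P$ in $E(V)$ itself, and since $P$ is not boundary-parallel the resulting disk has essential boundary on $\partial V$, giving a compressing disk for $\partial V$ in $E(V)$---this contradicts the irreducibility of $V$. If $\beta\subset\partial V$ but $\alpha$ is a wave arc, trivial in $P$ yet essential in $P^{\ast}$ because the subdisk it bounds in $P$ contains some $\gamma$-punctures, then a surgery analogous to the compressible case strictly reduces $|S\cap\gamma|$. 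If $\beta\subset A_\gamma$, then $\alpha$ has endpoints on meridians of $\gamma$, and the disk $D$ guides a tunnel isotopy that re-routes a sub-arc of $\gamma$ across $P$ along $\beta\cup D$, strictly reducing the intersection count. The remaining mixed position of $\beta$ is handled by combining these techniques.

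The main obstacle is the careful bookkeeping in the case analysis: ensuring that each intersection-reducing modification produces a genuine 2-decomposing sphere (with replacement annulus neither compressible nor boundary-parallel), and, when $\gamma$ is modified, that the resulting arc is still an unknotting tunnel. The irreducibility of $V$ is precisely what rules out the pathological configurations, since any compression or boundary-compression of a candidate replacement annulus that would otherwise invalidate the reduction produces a compressing disk of $\partial V$ in $E(V)$.
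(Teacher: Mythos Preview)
Your opening moves match the paper: minimize intersections, pass to $P^*=P\cap W$ in the genus-three handlebody $W$, and note that $P^*$ must compress or $\partial$-compress. The compressible case is fine and is exactly the paper's first reduction. The gap is in your $\partial$-compressible case, specifically in what you call the ``mixed position'' of $\beta$, which you dismiss with ``handled by combining these techniques.'' It is not.

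The configuration that does \emph{not} reduce is this: both endpoints of $\alpha$ lie on the \emph{same} outermost meridian of $N(\gamma)$ (the one closest to $\partial V$), and the arc $\beta$ leaves $A_\gamma$, runs across $\partial V\cap\partial W$ along an essential arc there, and returns. No isotopy of $S$ and no slide of $\gamma$ guided by $D$ lowers the count here. Your ``wave'' surgery does not apply (the arc $\alpha$ is genuinely essential in the annulus $P$, not a wave bounding a punctured subdisk), and your tunnel slide does not apply (the disk $D$ does not exhibit a sub-arc of $\gamma$ as parallel into $P$). What $D$ actually gives in this case is an annulus in $E(V)$, disjoint from the ball $B$ bounded by $S$, joining an essential curve on $\widehat S$ to a longitude of one of the solid-torus pieces of $V$ cut by $S$; in spine language, a cycle of $\Gamma$ is parallel to a core curve of $\widehat S$. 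This is structural information, not a reduction.

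From that point the paper needs real work that your proposal does not supply: a ``no three parallel arcs'' lemma (or else one builds a new $2$-decomposing sphere of smaller complexity by swapping an annulus in $\widehat S$ for a parallel annulus coming from the arcs), an outermost-fork/Scharlemann-cycle count to bound the number of intersection points, and finally, for the residual cases $n=2$ or $m=1$, a direct argument using the annuli above to produce either an essential torus in $W$ (impossible) or a compressing disk for $\partial V$ (contradicting irreducibility). Only after all of this does one conclude $|S\cap\gamma|\le 1$. Your sketch stops at the point where these arguments begin; the phrase ``combining these techniques'' does not cover them, and in fact for $|S\cap\gamma|=1$ the configuration with the annulus genuinely occurs, so a pure reduction strategy cannot succeed.
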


Let $V$ be an irreducible tunnel number one genus two handlebody-knot, and let $E(V)=S^3-\text{int}N(V)$ be its exterior.
Let $\gamma$ be an unknotting tunnel for $V$. So $W=S^3-\text{int}N(V\cup \gamma)$ is a genus 3 handlebody. Let $\alpha$ be the cocore of the tunnel $\gamma$, that is, a curve on $\partial N(\gamma)$ that bounds a disk in $N(\gamma)$ which intersects $\gamma$ transversely in one point. So $E(V)$ is obtained by adding a 2-handle to $W$ along $\alpha$. Note that the curve $\alpha$ is non-separating in $\partial W$. The arc $\gamma$ may have been slided over itself; in that case it can be expressed as $\gamma=\gamma_1\cup \gamma_2$, where $\gamma_1$ is a simple closed curve, and $\gamma_2$ is an arc with endpoints in $\partial V$ and $\gamma_1$, in this case let $v=\gamma_1\cap \gamma_2$.

\begin{figure}[htbp]
\begin{center}
\includegraphics[trim=0mm 0mm 0mm 0mm, width=.95\linewidth]{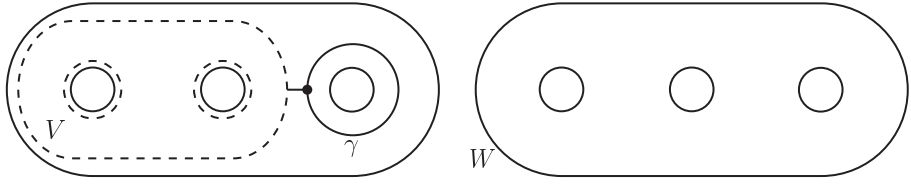}
\end{center}
\caption{$S^3=N(V\cup \gamma) \cup W$}
\label{tunnel}
\end{figure}

Let $S$ be a $2$-decomposing sphere for $V$. So $S\cap V$ consists of $2$ essential disks in $V$, and $\widehat S=S\cap E(V)$ is a properly embedded surface in $E(V)$. Note that because $V$ is a genus 2 handlebody, $S$ bounds a 3-ball $B$ in $S^3$, so that $B\cap \Gamma$ consists of a knotted spanning arc in $B$, where $\Gamma$ is a graph such that $N(\Gamma)=V$. It can be assumed that $\widehat S$ and $\gamma$ meet in general position, that is, $\widehat S$ intersects $\gamma$ transversely in a finite number of points, $\widehat S$ is disjoint from $v$ and $\gamma\cap \partial V$, and $\widehat S\cap N(\gamma)$ consists of a collection of disjoint disks. Then $\widehat S$ meets $\gamma_1$ in, say, $n$ points, and $\gamma_2$ in $m$ points. Define the complexity of $\widehat S$ to be $c(\widehat S)=n+m$.  

Label with $\alpha_1,\alpha_2,\dots,\alpha_n$ the disks of intersection of $\widehat S$ and $N(\gamma_1)$, labeled in order as they occur in $\gamma_1$, starting at $v$ with an arbitrary choice of direction, and label with $\beta_1,\dots,\beta_m$ the disks of intersection between $\widehat S$ and $N(\gamma_2)$, labeled as they occur in $\gamma_2$, going from $v$ to $\partial V$.
Denote the components of $\partial \widehat S$ by $s_1,\ s_2$.

\begin{figure}[htbp]
\begin{center}
\includegraphics[trim=0mm 0mm 0mm 0mm, width=.4\linewidth]{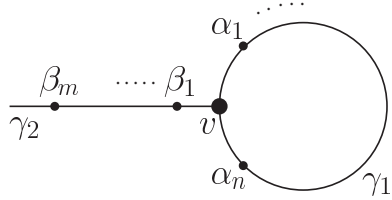}
\end{center}
\caption{Labeling on $\widehat S\cap \gamma_1\cup\gamma_2$}
\label{tunne2}
\end{figure}

Let $\widetilde S$ be the surface $\widehat S-\text{int} N(\gamma)$.
Assume that $\widehat S$ has been isotoped and $\gamma$ has been isotoped and slided, 
to make $c(\widehat S)$ minimal. Suppose also that $S$ is a 2-decomposing sphere so that $\widehat S$ has minimal complexity among all the 2-decomposing spheres for $V$. Assume that $c(\widehat S)\not= 0$, for otherwise we are finished. 

\begin{claim} $\widetilde S$ is incompressible in $W$.
\end{claim}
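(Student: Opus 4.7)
The plan is to argue by contradiction: assume $D$ is a compressing disk for $\widetilde S$ in $W$, and split into cases according to whether $\partial D$ is essential on the larger planar surface $\widehat S$. If $\partial D$ is essential on $\widehat S$, I would observe that $D\subset W = E(V)\setminus\text{int}\,N(\gamma)$ has its interior automatically disjoint from the disks $\alpha_i,\beta_j\subset \text{int}\,N(\gamma)$, so $D\cap\widehat S=\partial D$ and $D$ compresses $\widehat S$ inside $E(V)$, contradicting the incompressibility of $\widehat S$ built into the definition of a 2-decomposing sphere.

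If instead $\partial D$ is inessential on $\widehat S$, it bounds a subdisk $D'\subset\widehat S$, and since $\partial D$ remains essential on $\widetilde S$ the disk $D'$ must contain at least one of the $\alpha_i$ or $\beta_j$. Form the 2-sphere $\Sigma=D\cup D'$, which is disjoint from $V$, and let $B$ denote the 3-ball it bounds on the side of $\Sigma$ away from $V$. Because $\partial D$ sits in the interior of $\widetilde S$, each $\alpha_i,\beta_j$ is either entirely inside or entirely outside $D'$, and using that $\partial\widehat S\subset\partial V$ lies outside $B$ together with the connectedness of $\widehat S\setminus D'$, one checks that $\widehat S\cap B=D'$ and hence $\text{int}\,B$ is disjoint from $\widehat S$. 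Consequently each component $\delta$ of $\gamma\cap B$ is an arc whose endpoints are a pair of consecutive crossings of $\gamma$ with $\widehat S$, both located at disks of $\{\alpha_i,\beta_j\}\cap D'$.

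I would then contradict the minimality of $c(\widehat S)$ by sliding $\gamma$. Since $B$ is a 3-ball, $\delta$ cobounds a disk $\Delta\subset B$ with an arc $\tau\subset D'$ joining its endpoints and avoiding the other points of $D'\cap\gamma$. Using $\Delta$, I would slide $\gamma$ through $B$ onto $\tau$ and then push slightly off $\widehat S$ to the side opposite $B$; the resulting $\gamma'$ comes from an ambient isotopy supported inside $B\subset E(V)$, hence is isotopic to $\gamma$ rel $V$ and remains an unknotting tunnel, while the two crossings at the endpoints of $\delta$ have been cancelled without introducing new ones. This gives $|\gamma'\cap\widehat S|<c(\widehat S)$, contradicting minimality. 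I expect the main technical obstacle to be the subcase in which the self-slide vertex $v$ of $\gamma$ lies inside $B$: then the component of $\gamma\cap B$ containing $v$ is a tripod rather than a simple arc, and one must select the two ends to cancel with care and verify that the resulting graph slide still produces a valid unknotting tunnel.
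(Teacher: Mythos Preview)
Your first case, where $\partial D$ is essential on $\widehat S$, is correct and matches the paper's implicit reasoning: the paper simply asserts that $\partial D$ ``has to be inessential'' in $\widehat S$, which is exactly your observation that otherwise $D$ would compress the incompressible surface $\widehat S$.

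In the second case, however, there is a genuine gap. You assert that ``since $B$ is a $3$-ball, $\delta$ cobounds a disk $\Delta\subset B$ with an arc $\tau\subset D'$.'' This is false in general: a properly embedded arc in a $3$-ball cobounds such a disk if and only if it is \emph{unknotted} in the ball, and nothing in the setup prevents the components of $\gamma\cap B$ from being knotted. (Even if each component were unknotted, your disk $\Delta$ could still meet the other components of $\gamma\cap B$, so the proposed isotopy of $\gamma$ is not obviously available; but the unknottedness issue is already fatal.) The tripod subcase you flag at the end is therefore not the main obstacle --- the argument breaks down before that.

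The paper sidesteps this entirely by isotoping $\widehat S$ rather than $\gamma$. Since $E(V)$ is irreducible, the sphere $D\cup D'$ bounds a $3$-ball in $E(V)$, so one may replace the subdisk $D'\subset\widehat S$ by $D$; the resulting surface is isotopic to $\widehat S$ and, because $D\subset W$ is disjoint from $\gamma$, all intersection points lying in $D'$ (there is at least one) have been removed. This immediately contradicts the minimality of $c(\widehat S)$, with no need to control the arcs of $\gamma$ inside the ball. Your approach can be repaired by switching to this disk swap; the rest of your setup (identifying the ball on the side away from $V$, checking $\widehat S\cap B=D'$) then becomes unnecessary.
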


\begin{proof}
If $\widetilde S$ is compressible, then there is a disk $D$ in $W$ with $D\cap \widetilde S=\partial D$, which is an essential curve on $\widetilde S$. But in $\widehat S$ this curve has to be inessential, i.e. it bounds a disk $D'\subset \widehat S$ which intersects $\gamma$. As $E(V)$ is irreducible, the sphere $D\cup D'$ bounds a 3-ball in $E(V)$, and by interchanging $D$ and $D'$, we obtain a surface $\widehat S'$ isotopic to $\widehat S$ but with lower complexity. 
\end{proof}

Let $D$ be a compression disk for $\partial W$, which exists for $W$ is a handlebody. Consider the intersections between $D$ and $\widetilde S$, which we assume consist of arcs and circles. Simple closed curves of intersection can be removed, for $\widetilde S$ is incompressible. There must exist arcs of intersection, for any compression disk for $\partial W$ meets $\partial N(\gamma)$, for otherwise $\partial V$ would be compressible, and then as $c(\widehat S)\not= 0$, $D$ and $\widetilde S$ intersect. Assume that $D$ has been isotoped to make this intersection minimal. Label the endpoints of the arcs of intersection in $D$ with the labels of the disks of $\widehat S\cap N(\gamma)$ in which the points lie.

Let $\delta$ be an outermost arc of intersection in $D$, which cuts off a disk $D'\subset D$, so that $\partial D'=\delta \cup \eta$, where $\eta$ is an arc on $\partial W$, and the interior of $D'$ is disjoint from $\widetilde S$. 

\begin{claim} \label{tunnelintersection} Both endpoints of $\delta$ are in $\alpha_1$, $\alpha_n$ or $\beta_m$. Furthermore,

\begin{enumerate}
\item If both endpoints are in $\alpha_1$ or $\alpha_n$, then $m=0$, $\eta$ intersects $\partial E(V)$, $\eta\cap \partial E(V)$ is an essential arc in $\partial E(V)-N(\gamma)$, and $\delta$ is an essential arc in $\widetilde S$. 

\item If both endpoints are in $\beta_m$ then $\eta$ intersects $\partial E(V)$, and $\eta\cap \partial E(V)$ is an essential arc in $\partial E(V)-N(\gamma)$, and $\delta$ is an essential arc in $\widetilde S$.

\end{enumerate}
\end{claim}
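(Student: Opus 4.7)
The plan is to argue by contradiction, converting the outermost disk $D'$ into a reducing isotopy and extracting contradictions to the minimality of $c(\widehat S)$ or of $|D\cap\widetilde S|$. First I would catalogue the components of $\partial W\setminus\partial\widetilde S$: each interior circle $\partial\alpha_i$ with $1<i<n$ is flanked on both sides by honest annuli $A_{i-1},A_i\subset\partial N(\gamma_1)$; each $\partial\beta_j$ with $1\le j<m$ is similarly flanked by annuli $B_{j-1},B_j\subset\partial N(\gamma_2)$; and the three extreme circles $\partial\alpha_1,\partial\alpha_n,\partial\beta_m$ all sit on the boundary of one ``special'' region that fuses the pair of pants $A_n\setminus e$ with the adjacent pieces $B_0,B_m$ of $\partial N(\gamma_2)$ and with the part of $\partial V\setminus(s_1\cup s_2\cup(\gamma_2\cap\partial V))$ touching them through the attaching circles.

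The core reduction is as follows. Suppose both endpoints of $\delta$ lie on an interior circle $\partial\alpha_i$ with $1<i<n$. Then $\eta$ is an arc in $\partial W\setminus\partial\widetilde S$ with both endpoints on $\partial\alpha_i$, hence is confined to one of the annuli $A_{i-1}$ or $A_i$; on an annulus, any such arc is boundary-parallel, so $\eta$ cobounds a disk $\Delta$ with a subarc $\xi\subset\partial\alpha_i$. Next I would choose a spanning arc $\delta'\subset\alpha_i$ with $\partial\delta'=\partial\delta$ so that the half-disk $\alpha_i^\ast\subset\alpha_i$ cut off by $\delta'$ along $\xi$ avoids the intersection point $\alpha_i\cap\gamma$; then $F:=D'\cup\Delta\cup\alpha_i^\ast$ is an embedded disk in $E(V)$, disjoint from $\gamma$, with boundary $\delta\cup\delta'\subset\widehat S$. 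Because $\widehat S$ is incompressible in $E(V)$, $\partial F$ must bound a subdisk $E\subset\widehat S$, and by irreducibility of $E(V)$ the sphere $E\cup F$ bounds a $3$-ball; isotoping $\widehat S$ across this ball can be arranged to replace whichever of $E$ or $\widehat S\setminus E$ contains $\alpha_i\cap\gamma$ by a push-off of $F$, strictly reducing $c(\widehat S)$. The same scheme rules out interior $\partial\beta_j$ with $1\le j<m$, pairs of distinct circles cobounding an annular component of $\partial W\setminus\partial\widetilde S$, and configurations with an endpoint on $s_1$ or $s_2$ (in the last case the analogous $F$ would compress $\widehat S$ in $E(V)$, contradicting incompressibility).

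Once the endpoints have been forced to lie on one of $\partial\alpha_1,\partial\alpha_n,\partial\beta_m$, the supplementary conclusions follow. In case (1), if $m>0$ then the restriction of the special region to $\partial N(\gamma)$ is a pair of pants with boundary $\partial\alpha_1\cup\partial\alpha_n\cup\partial\beta_1$: an arc on a pair of pants with both endpoints on one boundary is either boundary-parallel (yielding the same reduction as above) or wraps essentially around one of the other two circles, and each essential case again provides a reducing modification of $\widehat S$; either way minimality fails, forcing $m=0$. With $m=0$, the special region extends through the $\partial V$-attaching disk into $\partial V$, and to avoid a reduction inside the annular subregions the arc $\eta$ must enter $\partial V$; a further minimality check excludes a boundary-parallel $\eta\cap\partial E(V)$ in $\partial E(V)\setminus N(\gamma)$, giving essentiality there. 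Essentiality of $\delta$ in $\widetilde S$ is automatic, since an inessential $\delta$ would cobound a disk in $\widetilde S$ with a subarc of $\partial\widetilde S$ and thus permit an isotopy of $D$ lowering $|D\cap\widetilde S|$. Case (2) is entirely parallel, with $\partial\beta_m$ and the $\partial V$-adjacent region playing the roles of $\partial\alpha_1$ and the $v$-adjacent region. The main obstacle will be the bookkeeping in the core reduction, namely verifying that $F$ can be pushed cleanly off its overlap $\alpha_i^\ast\subset\widehat S$ to produce a genuine competing $2$-decomposing sphere of strictly smaller complexity; this is where the minimality assumptions and the choice of which half of $\alpha_i$ contains $\alpha_i\cap\gamma$ must be used in concert.
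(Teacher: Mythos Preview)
Your overall strategy matches the paper's, but the execution has a genuine topological error that leaves the hardest cases unaddressed. In your first paragraph you describe one ``special'' region of $\partial W\setminus\partial\widetilde S$ containing $\partial\alpha_1,\partial\alpha_n,\partial\beta_m$ together with $B_0$, $B_m$, and part of $\partial V$. When $m\ge 1$ this is wrong: the curves $\partial\beta_j$ separate $\partial N(\gamma_2)$, so there are \emph{two} non-annular regions --- a pair of pants near $v$ with boundary $\partial\alpha_1\cup\partial\alpha_n\cup\partial\beta_1$, and a large region near $\partial V$ with boundary $s_1\cup s_2\cup\partial\beta_m$. In particular $\partial\beta_1$, not $\partial\beta_m$, sits on the pair of pants, and $\partial\beta_1$ is \emph{not} flanked by annuli on both sides, so your core reduction does not apply to it.

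Because of this, several configurations escape your argument: $\eta$ can be an essential arc in the pair of pants near $v$ with endpoints on distinct circles ($\alpha_1$--$\alpha_n$, $\alpha_1$--$\beta_1$, $\alpha_n$--$\beta_1$), or with both endpoints on $\partial\beta_1$. In the paper these are Cases 5--7, and Case 7 (both ends on $\beta_1$, $\eta$ disjoint from $\partial E(V)$) is precisely where the real work lies. One must track the algebraic intersection of $\eta$ with the cocore curve $\alpha$: if $\eta\cap\alpha=\emptyset$ one builds a sphere meeting $\gamma_1$ transversely in a single point (impossible) or finds an isotopy lowering $c(\widehat S)$; if $|\eta\cap\alpha|\ge 2$ one produces a punctured lens space; and if $|\eta\cap\alpha|=1$ one slides $\gamma$ through $\widehat S$ to reduce complexity. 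Your line ``each essential case again provides a reducing modification of $\widehat S$'' is exactly where this argument is needed but not supplied. The same mechanism is what the paper invokes (via Case 7) to force $m=0$ in Case 8; your ``further minimality check'' does not capture it. Finally, your justification that $\delta$ is essential in $\widetilde S$ via reducing $|D\cap\widetilde S|$ is not the paper's route --- there one observes that an inessential $\delta$ would combine with $D'$ to give a compressing disk for $\partial E(V)$, contradicting irreducibility of $V$ --- though your alternative can be made to work with a standard innermost-arc swap.
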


\begin{proof}
There are several possible cases for $D'$.

\begin{description}
\item [Case 1] An end of $\delta$ is in $s_i$ and the other in $s_j$, $i\not= j$.
\item [Case 2] Both ends of $\delta$ are in $s_i$.
\item [Case 3] An end of $\delta$ is in $s_i$ and the other in $\alpha_1$, $\alpha_n$ or $\beta_m$.
\item [Case 4] An end of $\delta$ is in $\alpha_i$ and the other in $\alpha_{i+1}$ (or $\beta_i$ and $\beta_{i+1}$), and $\eta$ is disjoint from $N(v)$ and from $\partial N(V)$.
\item [Case 5] An end of $\delta$ is in $\beta_1$ and the other in $\alpha_1$ or $\alpha_n$.
\item [Case 6] An end of $\delta$ is in $\alpha_1$ and the other in $\alpha_n$, and $\eta$ intersects $N(v)$ or $\partial E(V)$.
\end{description}

The proof that all these cases cannot happen is identical to the one given in \cite{Eu}, Prop. 2.3, Cases 1-6. There are three more cases.

\begin{description}
\item [Case 7] Both ends of $\delta $ are in $\beta_1$ and $\eta$ is disjoint from $\partial E(V)$.
\end{description}

If $\eta$ is disjoint from $N(v)$ then the intersection between $D$ and $\widetilde S$ would not be minimal, so it must intersect it. The arc $\delta$ cuts off a disk $E$ from $\widehat S$, so that $\partial E=\delta\cup\beta'$, where $\beta'$ is a subarc of $\beta_1$. Note that $E$ may contain disks of intersection with $V$. We may assume that $E$ contains at most one disk of intersection with $V$. So $D'\cup_{\delta}E$ is a disk with boundary $\eta\cup \beta'$. If $\eta$ is disjoint from the curve $\alpha$, then there is a disk $E'\subset N(\gamma)$, with $\partial E'=\eta\cup \beta'$, and which intersects $\gamma_1$ in one point. If $E$ is disjoint from $N(\gamma_1)$, then $D'\cup E\cup E'$ is a sphere in $S^3$ intersecting $\gamma_1$ transversely in one point, which is not possible, so $E$ must intersect $\gamma_1$ an odd number of times. It follows that the disk $E$ must be disjoint from $V$ for otherwise the sphere $D'\cup E\cup E'$ intersects $V$ in one disk, which is not possible. By isotoping $\widehat S$ through the 3-ball bounded by $D'\cup E\cup E'$, we get a sphere isotopic to $\widehat S$, intersecting $\gamma_1$ in a new point, but where the disk $\beta_1$ and the intersections of $E$ with $\gamma_1$ have been eliminated, getting then a sphere with lower complexity.

Then the arc $\eta$ intersects $\alpha$, and  we must have that $n=0$. If the algebraic intersection number of $\eta$ with $\alpha$ is not $\pm 1$, then $D'\cup E$ can be isotoped so that its boundary lies on $\partial N(\gamma_1)$, and then $N(D'\cup E)\cup N(\gamma_1)$ is a punctured lens space, which is not possible. So the arc $\eta$ must intersect $\alpha$ exactly once. In this case the disk $D'$ can be used to isotope $\gamma$ through $\widehat S$, eliminating $\beta_1$ and then reducing $c(\widehat S)$.

\begin{description}
\item [Case 8] Both ends of $\delta $ are in $\alpha_1$ or $\alpha_n$.
\end{description}

If $\eta$ is disjoint from $\partial E(V)$, then the intersection between $D$ and $\widetilde S$ would not be minimal, or we can find a sphere intersecting $\gamma_1$ in one point, or a sphere isotopic to $\widehat S$ but with lower complexity, as in the proof of Case 7. So assume that $\eta$ meets $\partial E(V)$. So we must have $m=0$, and $\gamma_1$ can be slided so that $\gamma$ is just an arc.  The arc $\eta\cap \partial E(V)$ must be essential in $\partial E(V)-N(\gamma)$, for otherwise $\eta$ could be isotoped to lie in $N(\gamma)$. Finally, note that the arc $\delta$ is essential in $\widetilde S$, for otherwise it cuts off a disk $E$ from $\widetilde S$, so that $D'\cup E$ is a disk which can be isotoped to be a compression disk for $E(V)$.

\begin{description}
\item [Case 9] Both ends of $\delta $ are in $\beta_m$.
\end{description}

If $\eta$ is disjoint from $\partial E(V)$, then the intersection between $D$ and $\widetilde S$ would not be minimal or $m=1$ and we are in Case 7. So assume that $\eta$ meets $\partial E(V)$. An argument as in Case 8 shows that $\eta\cap \partial E(V)$ is an essential arc in $\partial E(V)-N(\gamma)$, and that $\eta$ is essential in $\widetilde S$.

This completes the proof of the claim. 
\end{proof}

\begin{claim} \label{tunnelcircle} There is a circle $w$ of $\Gamma$, associated to $\delta$,  which is parallel to an essential circle on $\widehat S$, that is, there is an annulus $A$ in $E(V)$ with interior disjoint from $\widehat S$,  so that one component of $\partial A$ is a curve on $\widehat S$ and the other component is a circle $w$ of $\Gamma$. Furthermore the circle $w$ and the annulus $A$ lie outside of the 3-ball $B$ bounded by $S$. In particular if there is an outermost arc with endpoints in $\alpha_1$ and another one with endpoints in $\alpha_n$, then $n$ must be an even number.
\end{claim}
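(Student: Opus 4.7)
My plan is to construct the annulus $A$ directly from the outermost disk $D'$ together with the meridian disk $\alpha_1$. By Claim~\ref{tunnelintersection}, I may assume the outermost arc $\delta$ has both endpoints on $\partial\alpha_1$ (the cases of $\alpha_n$ and $\beta_m$ are handled identically); I also know that $\delta$ is essential in $\widetilde S$ and that $\eta\cap\partial E(V)$ is an essential arc of $\partial E(V)-N(\gamma)$. The endpoints of $\delta$ split $\partial\alpha_1$ into two arcs $\alpha_1^+$ and $\alpha_1^-$, and together with $\delta$ and $\eta$ these arrange into a theta-graph whose three cycles produce the candidate boundary curves of $A$: on $\widehat S$ one has $c^\pm:=\delta\cup\alpha_1^\pm$, and on $\partial W$ one has $w^\pm:=\eta\cup\alpha_1^\pm$.

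The main step is to produce $A$ by smoothing the union $D'\cup\alpha_1$. The two disks $D'\subset W$ and $\alpha_1\subset\widehat S$ meet in exactly the two corner points $\partial\delta$, so $D'\cup\alpha_1$ deformation retracts to a circle and a regular neighborhood $N(D'\cup\alpha_1)$ in $E(V)$ is a solid torus. Resolving the two corners consistently yields an embedded annulus $A$ whose boundary components are $c:=\delta\cup\alpha_1^+$ on $\widehat S$ and $w:=\eta\cup\alpha_1^-$ on $\partial W$ (the opposite smoothing swaps $+$ and $-$; I pick whichever smoothing yields an essential curve on $\widehat S$, and the other choice would give an inessential one whose exchange reduces complexity). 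Because $D'\subset W\subset E(V)\setminus\mathrm{int}\,B$, a small isotopy places the interior of $A$ off $\widehat S$ on the outside of $B$ as required.

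I then verify the two boundary circles. If $c$ were inessential on $\widehat S$ it would bound a disk $E\subset\widehat S$; then $D'\cup E$ is a 2-sphere in $E(V)$ bounding a 3-ball by irreducibility, and an interchange across this ball would reduce either the complexity $c(\widehat S)$ or the intersection complexity $|D\cap\widetilde S|$, contradicting the minimality assumptions, so $c$ is essential. For $w$, the arc $\alpha_1^-$ is almost all of the meridian $\partial\alpha_1$ of the solid torus $N(\gamma_1)$, while $\eta$ closes it up via an essential arc on $\partial V-N(\gamma)$; a direct inspection on $\partial W$ shows that $w$ is isotopic to a longitude of $N(\gamma_1)$, hence to the core circle $\gamma_1$, which is the circle $w$ of $\Gamma$ claimed.

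Finally, the parity statement is immediate: outermost arcs with endpoints in both $\alpha_1$ and $\alpha_n$ produce two such annuli outside $B$, and since $\gamma_1$ is a simple closed curve in $S^3$ while the 2-sphere $S$ separates $S^3$ into $B$ and its complement, the number $n=|\gamma_1\cap S|$ is automatically even. The main technical obstacle I anticipate is the precise identification of $w$ on $\partial W$ as a longitude of $N(\gamma_1)$ rather than some other slope; this will require using the essentiality of $\eta\cap\partial E(V)$ together with control over how $\eta$ threads the tunnel region.
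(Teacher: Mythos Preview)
Your construction of an annulus by smoothing $D'\cup\alpha_1$ is reasonable, but the identification of its second boundary component is wrong, and this is exactly the heart of the claim. You assert that $w=\eta\cup\alpha_1^-$ is isotopic to a longitude of $N(\gamma_1)$ and hence to $\gamma_1$, and you then declare $\gamma_1$ to be ``the circle $w$ of $\Gamma$''. But $\gamma_1$ is a piece of the \emph{unknotting tunnel} $\gamma\subset E(V)$, whereas $\Gamma$ is the \emph{spine of the handlebody} $V$; these are entirely different objects. The claim is that some cycle of the theta-curve or handcuff graph $\Gamma$ inside $V$ is parallel to an essential curve on $\widehat S$. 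Nothing in your argument produces a curve on $\partial V$, let alone one isotopic to a core of a solid-torus piece of $V$.

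The paper's proof proceeds differently and fills this gap. After sliding $\gamma$ to a single arc (possible since $m=0$ in the $\alpha_1,\alpha_n$ cases), one takes a rectangle $E\subset\partial N(\gamma)$ joining the two $\partial N(\gamma)$-subarcs of $\eta$, so that $A=D'\cup E$ is an annulus with one boundary $c_1\subset\widehat S$ and the other $c_0\subset\partial E(V)=\partial V$. The real work is then to show that $c_0$ is isotopic to a core of a solid-torus component of $V$ cut along $S\cap V$: first, $c_0$ cannot be a meridian of $V$, since surgering $S$ along $A$ would then produce a $2$-decomposing sphere of lower complexity; second, $c_0$ cannot wind more than once around its solid torus, since capping $A$ with a subdisk of $S$ would yield a punctured lens space in $S^3$. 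These two steps are what force $c_0$ to be a longitude of that solid torus, i.e.\ a circle of $\Gamma$. Your proposal contains neither step.

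Your parity argument also fails: in the relevant situation $m=0$ and $\gamma$ has been slid to an arc, so the closed curve $\gamma_1$ no longer exists and the homological parity of $|\gamma_1\cap S|$ is unavailable (indeed Claim~\ref{nsmall} later treats $n=3$). The correct reason is that the annulus $A$, and hence the adjacent endpoint of the arc $\gamma$ on $\partial V$, lies outside $B$; if both $\alpha_1$ and $\alpha_n$ admit such outermost arcs then both endpoints of $\gamma$ lie outside $B$, and an arc beginning and ending outside $B$ must cross $S$ an even number of times.
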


\begin{figure}[htbp]
\begin{center}
\includegraphics[trim=0mm 0mm 0mm 0mm, width=.4\linewidth]{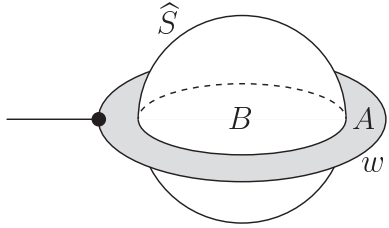}
\end{center}
\caption{Configuration of $A$ and $w$}
\label{annulus}
\end{figure}

\begin{proof}
By Claim \ref{tunnelintersection} we can assume that the endpoints of $\delta$ both lie in $\alpha_1$, $\alpha_n$ or $\beta_m$. If they lie in $\alpha_1$ or $\alpha_n$, then as the arc $\eta$ must intersect $\partial V$, we can assume that $\gamma$ has been slided so that it is just an arc. In all three cases there is a disk $E\subset \partial N(\gamma)$ so that $D'\cup E$ is an annulus with one boundary component $c_1$ in $\widehat S$ and the other $c_0$ in $\partial E(V)$. The curves $c_0$ and $c_1$ are essential in $\widehat S$ and $\partial E(V)$ respectively, by Claim \ref{tunnelintersection}. If $c_0$ is a meridian of $V$, then we can cut off $S$ with $A$ to get two new decomposing spheres. At least one of them must be essential, and the intersection with $\alpha_1$, $\alpha_n$ or $\beta_m$ is eliminated, so we get a 2-decomposing sphere with lower complexity, contradicting the hypothesis.

Suppose then that $c_0$ is not a meridian of $V$. The curve $c_0$ then lies in the boundary of a solid torus $V'\subset V$, this because $S$ divides $V$ into solid tori and 3-balls. If $c_0$ wraps twice or more around this solid torus, we have a punctured lens space, which is formed by the union of $V'$, the annulus $A$, plus a disk in $S$ bounded by $c_1$. As this is not possible, $c_0$ wraps just once around $V'$, and then it is isotopic to its core $w$. This implies that $w$ is isotopic to the curve $c_1$. Finally note that the annulus $A$ can not be contained in $B$, for $B$ contains just an arc of $\Gamma$. \end{proof}

\begin{claim} \label{tunnelparallelarcs} There can not be 3 parallel arcs in $D$, where one of the arcs is outermost in $D$, and the endpoints of the arcs are all in the $\alpha_i's$ or all in the $\beta_i's$.
\end{claim}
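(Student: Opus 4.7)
The plan is to prove this by contradiction, using Claim \ref{tunnelcircle} iteratively on the three parallel arcs. Suppose three parallel arcs $\delta_1, \delta_2, \delta_3$ exist in $D$, with $\delta_1$ outermost and all six endpoints in the $\alpha_i$'s (the $\beta_i$ case is symmetric). By Claim \ref{tunnelintersection} the two endpoints of $\delta_1$ lie in a single disk $\alpha_1$ (or $\alpha_n$), say $\alpha_1$; parallelism then forces all six endpoints to lie on $\partial\alpha_1$, arranged in the cyclic order $p_1, p_2, p_3, q_3, q_2, q_1$ on $\partial D$, where $\delta_i$ joins $p_i$ to $q_i$. Let $R_k \subset D$ be the rectangle between $\delta_k$ and $\delta_{k+1}$ for $k=1,2$, and let $D_1'$ be the outermost disk bounded by $\delta_1 \cup \eta_1$.

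Applying Claim \ref{tunnelcircle} to $\delta_1$ produces an annulus $A_1 = D_1' \cup E_1$ in $E(V) \setminus B$, with one boundary $c_0^{(1)} \subset \partial V$ isotopic to a circle $w_1$ of the spine $\Gamma$, and the other $c_1^{(1)} = \delta_1 \cup \zeta_1 \subset \widehat S$ with $\zeta_1 \subset \partial\alpha_1$. I would next construct analogous annuli $A_2, A_3$ from the enlarged disks $D_1' \cup R_1$ and $D_1' \cup R_1 \cup R_2$: their boundaries are $\delta_2 \cup \eta_2$ and $\delta_3 \cup \eta_3$, with each $\eta_i$ still meeting $\partial V$ essentially since $\eta_i \supset \eta_1$. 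The interior arcs $\delta_1$ (and $\delta_2$) on these enlarged disks lie on $\widetilde S$; absorbing them into $\partial N(\gamma)$ via thin subdisks of $\alpha_1$ joining the relevant endpoint pairs produces embedded disks in $W$ to which the construction of Claim \ref{tunnelcircle} applies. The resulting $A_2, A_3$ are annuli parallel to $A_1$, with $\partial V$-boundary circles $c_0^{(2)}, c_0^{(3)}$ isotopic to $c_0^{(1)}$ on $\partial V$, since all three annuli are built from rectangles meeting $\partial\alpha_1$ in parallel patterns through the same disk $\alpha_1$.

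The three disjoint parallel annuli $A_1, A_2, A_3$, together with the bands between their $\partial V$-boundaries on $\partial V$ and the rectangles $R_1, R_2$ in $D$, cobound two 3-balls in $E(V) \setminus B$ each intersecting $N(\gamma)$ in a single product band meeting $\alpha_1$. Isotoping $\widehat S$ across one of these 3-balls eliminates at least two intersection points of $\widehat S$ with $\gamma$ near $\alpha_1$, producing a 2-decomposing sphere $\widehat S'$ with $c(\widehat S') < c(\widehat S)$, contradicting the chosen minimality of $c(\widehat S)$.

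The main obstacle will be realizing $A_2, A_3$ as embedded annuli parallel to $A_1$: the absorption of interior arcs into $\partial N(\gamma)$ must be verified to introduce no new intersections, and the meridian versus essential-non-meridian dichotomy of Claim \ref{tunnelcircle} must resolve consistently for $i = 2, 3$, since a meridional $c_0^{(i)}$ would itself immediately yield a decomposing sphere of lower complexity. An alternative, more combinatorial route is to analyze the arcs $\mu_1, \mu_2$ of $\partial R_1 \cap \partial W$: their interactions with $\partial V$ are constrained by the essentiality of $\eta_1 \cap \partial V$ in $\partial V \setminus N(\gamma)$, and outermost-arc swapping inside $R_1, R_2$ produces new outermost arcs in $D$ that must still obey Claim \ref{tunnelintersection}, quickly becoming inconsistent with the existence of three parallel arcs.
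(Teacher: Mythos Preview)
Your argument rests on a misreading of the labeling. In the first paragraph you assert that ``parallelism then forces all six endpoints to lie on $\partial\alpha_1$''. This is false. The label of an endpoint records which disk $\alpha_i$ of $\widehat S\cap N(\gamma_1)$ it lies on, and as $\partial D$ runs along $\partial N(\gamma_1)$ it meets these disks consecutively (compare the label sequences in Claim~\ref{bounds}). Hence if the outermost arc $\delta_1$ has both endpoints on $\alpha_1$, the adjacent parallel arc $\delta_2$ has both endpoints on $\alpha_2$, and $\delta_3$ has both endpoints on $\alpha_3$. The phrase ``the endpoints of the arcs are all in the $\alpha_i$'s'' in the statement means only that every endpoint carries an $\alpha$-label rather than a $\beta$-label, not that they all sit on the same circle $\partial\alpha_1$.

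Once the labels are understood correctly, your construction collapses: the enlarged disks $D_1'\cup R_1$ and $D_1'\cup R_1\cup R_2$ do not have their $\partial W$-arcs returning to $\alpha_1$, so there is no way to cap them off to annuli parallel to $A_1$ with a boundary component on $\partial V$. The paper's argument is genuinely different. From the rectangles $D_2$, $D_3$ between consecutive $\delta_i$'s one builds annuli $A_2=D_2\cup E_2$ and $A_3=D_3\cup E_3$ (with $E_2,E_3\subset\partial N(\gamma)$) whose boundary components $c_1,c_2,c_3$ all lie on $\widehat S$; the point is that $A_2$ sits inside the $3$-ball $B$ while $A_3$ sits outside it. Replacing the sub-annulus $F\subset\widehat S$ bounded by $c_2\cup c_3$ with $A_3$ yields a $2$-decomposing sphere $\Sigma$. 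One checks $\Sigma$ is still essential (the knotted arc of $\Gamma\cap B$ persists) and $c(\widehat\Sigma)<c(\widehat S)$, contradicting minimality. Your isotopy ``across a $3$-ball between parallel annuli'' never materializes, and neither the meridian dichotomy nor the combinatorial alternative you sketch addresses the actual configuration.
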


\begin{proof}
Assume that there are 3 parallel arcs in $D$, say $\delta_1$, $\delta_2$, $\delta_3$, where $\delta_1$ is an outermost arc with both endpoints in $\alpha_1$, and $\delta_2$, and $\delta_3$ have both endpoints in $\alpha_2$ and $\alpha_3$, respectively. If the endpoints of $\delta_1$ are in $\alpha_n$ or $\beta_m$, the proof is similar. Let $D_1$ be the disk cut off by $\delta_1$ in $D$, and let $D_2$ ($D_3$) be the disks in $D$ determined by $\delta_1$ and $\delta_2$ ($\delta_2$ and $\delta_3$ respectively). As in Claim \ref{tunnelparallelarcs}, there is a disk $E_1\subset \partial N(\gamma)$, so that $A_1=D_1\cup E_1$ is an annulus in $E(V)$ with interior disjoint from $\widehat S$,  so that one component of $\partial A$ is a curve $c_1$ on $\widehat S$ and the other component is a circle of $\Gamma$. There are disks $E_2$, $E_3$ in $\partial N(\gamma)$, so that $A_2=D_2\cup E_2$, $A_3=D_3\cup E_3$ are two annuli. The annulus $A_2$ is contained in the 3-ball $B$ bounded by $S$, and the annulus $A_3$ is in the complement of this ball, $\partial A_2=c_1\cup c_2$ and $\partial A_3=c_2\cup c_3$, where $c_2$ and $c_3$ are essential simple closed curves in $\widehat S$. The curves $c_2$ and $c_3$ bound an annulus $F\subset \widehat S$. Consider the sphere $\Sigma = (S-F)\cup A_3$. This is a 2-decomposing sphere for $V$; this sphere is not trivial, because the knotted arc of $\Gamma$ lying in $B$ still remain as a knotted arc inside the 3-ball bounded by $\Sigma$. If the annuli $F$ and $A_3$ are parallel, then in fact $\widehat S$ and $\widehat \Sigma$ are isotopic, but $c(\widehat \Sigma) < c(\widehat S)$. If these annuli are not parallel, then $\Sigma$ is a new essential 2-decomposing sphere with $c(\widehat \Sigma) < c(\widehat S)$. 
\end{proof}

Note that in the above proof, the curves $c_1$ and $c_2$ bound an annulus $F_1$ in $\widehat S$. But in this case the new sphere $(S-F_1)\cup A_2$ may not be essential.

\begin{claim} \label{bounds} We have that $n\leq 4$, or $m\leq 2$.
\end{claim}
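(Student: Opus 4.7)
The plan is to argue by contradiction, assuming both $n\ge 5$ and $m\ge 3$ hold simultaneously. Under the hypothesis $m\ge 1$, case (1) of Claim \ref{tunnelintersection} cannot occur (it would force $m=0$), so every outermost arc $\delta$ of $D\cap \widetilde S$ in $D$ must fall under case (2): both endpoints of $\delta$ lie on the label $\beta_m$. Claim \ref{tunnelparallelarcs} then forbids any three parallel arcs in $D$ whose endpoints all lie in the $\beta_j$-labels, so the outermost parallel band adjacent to each outermost sub-disk of $D$ contains at most two arcs.

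Next I would exploit the hypothesis $m\ge 3$ to locate, inside $D$, an outermost arc $\delta$ with both endpoints in $\beta_m$ together with a parallel neighbour $\delta'$ one step inwards whose endpoints lie either in $\beta_m$ itself or in $\beta_{m-1}$; such a $\delta'$ exists because the remaining $\beta$-labels provide enough arc endpoints to fill the inner face of the outermost band without violating Claims \ref{tunnelintersection}--\ref{tunnelparallelarcs}. Applying Claim \ref{tunnelcircle} to $\delta$ produces an annulus $A\subset E(V)$ cobounded by an essential curve $c_1\subset\widehat S$ and a circle $w$ of $\Gamma$ lying outside the 3-ball $B$. Following the cut-and-paste construction performed in the proof of Claim \ref{tunnelparallelarcs}, I would then replace the sub-annulus $F\subset\widehat S$ cobounded by $c_1$ and the essential curve on $\widehat S$ determined by $\delta'$ by $A$, producing a new 2-sphere $\Sigma=(S-F)\cup A$. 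Because $B$ still contains a knotted spanning arc of $\Gamma$ and $A$ is disjoint from $B$, $\Sigma$ is a genuine (non-trivial) 2-decomposing sphere, while the construction eliminates at least one intersection disk $\beta_j$, so $c(\widehat\Sigma)<c(\widehat S)$, contradicting the minimality of $c(\widehat S)$. The parallel case $F\simeq A$ is handled exactly as at the end of the proof of Claim \ref{tunnelparallelarcs}.

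A symmetric argument, with the roles of the $\alpha$- and $\beta$-labels exchanged, handles the situation in which the outermost arcs lie in $\alpha_1$ or $\alpha_n$; here (necessarily $m=0$, by case (1) of Claim \ref{tunnelintersection}) one invokes the parity conclusion of Claim \ref{tunnelcircle} when outermost arcs occur at both $\alpha_1$ and $\alpha_n$, to guarantee enough room for the cut-and-paste. The principal obstacle is the combinatorial bookkeeping required to confirm that, when $n\ge 5$ and $m\ge 3$, the parallel neighbour $\delta'$ with the required labels actually exists in $D$; this amounts to controlling the cyclic arrangement of the endpoints of $D\cap\widetilde S$ on the circles $\partial\alpha_i$ and $\partial\beta_j\subset\partial D$ using the forbidden configurations of Claims \ref{tunnelintersection} and \ref{tunnelparallelarcs}, and in particular to ruling out the possibility that every inward arc has endpoints skipping over both $\beta_{m-1}$ and $\beta_m$.
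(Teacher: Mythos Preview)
Your proposal has a genuine gap in the cut-and-paste step. The annulus $A$ produced by Claim~\ref{tunnelcircle} has one boundary component $c_1$ on $\widehat S$ and the other equal to a circle $w$ of $\Gamma$ lying on $\partial V$; it does \emph{not} have both boundaries on $\widehat S$. Hence $(S-F)\cup A$ is not a $2$-sphere at all: it has boundary $w \cup (\text{the curve from }\delta')$. The annulus that is actually used in the proof of Claim~\ref{tunnelparallelarcs} for the replacement is $A_3$, built from the rectangle between the \emph{second and third} parallel arcs together with a piece of $\partial N(\gamma)$; this $A_3$ has both boundary curves on $\widehat S$ and, crucially, lies \emph{outside} $B$. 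With only two parallel arcs $\delta,\delta'$ you obtain instead the analogue of $A_2$, which sits inside $B$, and as the paper remarks just after Claim~\ref{tunnelparallelarcs}, swapping along that annulus may yield an inessential sphere. So two parallel arcs are not enough to run the argument you describe; you genuinely need three, which is precisely what Claim~\ref{tunnelparallelarcs} forbids.

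The missing idea is the outermost-fork counting argument (as in \cite{Sch}) that the paper uses. One passes to the dual tree $G$ of the arc pattern in $D$. If $G$ is a path, all arcs are parallel and Claim~\ref{tunnelparallelarcs} immediately gives at most two of them. Otherwise one finds a fork vertex $x$ with two adjacent branches $r_1,r_2$ ending in outermost regions; the boundary arc $\epsilon$ of $D$ joining these two outermost regions must traverse the full sequence of labels down from each outermost label and back. When both outermost arcs are $\beta_m$--$\beta_m$ (the only possibility once $m\ge 1$, as you correctly observe), $\epsilon$ crosses at least $2m$ labels, so one branch contains $\ge m$ parallel arcs, forcing $m\le 2$ by Claim~\ref{tunnelparallelarcs}. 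When $m=0$ and the two outermost arcs are $\alpha_1$--$\alpha_1$ and $\alpha_n$--$\alpha_n$, the same count gives $n\le 4$. This counting replaces the ``combinatorial bookkeeping'' you flagged as the principal obstacle, and it avoids the faulty annulus swap entirely. Your final paragraph about a symmetric $\alpha$-case is also inconsistent with your standing hypothesis $m\ge 3$, since that case forces $m=0$.
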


\begin{proof}
In this and next claim we do an outermost fork argument, as in \cite{Sch}. $D\cap \widetilde S$ consists of a collection of arcs in $D$. We construct a tree in $D$ as follows: take a vertex for each region of $D-\widetilde S$, and connect two vertices if their respective regions are adjacent, that is, they have an arc of $D\cap \widetilde S$ in common. The resultant graph $G$ is a tree, because $D$ is a disc. The ends of the tree, that is the vertices of degree one, correspond to the outermost regions of $D$.

A branch of $G$ is a trajectory that begins in a vertex of degree one of $G$ and finishes in a vertex of degree $>2$, so that the intermediate vertices of the branch are all of degree 2. If all the vertices of $G$ are of degree 1 or 2, that is, $G$ is a trajectory, then all the arcs are parallel. 
By Claim \ref{tunnelparallelarcs} there can not be 3 parallel arcs, so $G$ has at most 3 vertices and 2 edges, which implies that $n\leq 2$ or $m\leq 2$.

If $G$ is not a trajectory, let $G'$ be the graph obtained by eliminating the branches, that is, by clearing the vertices of degree 1 and 2 of the branches together with the corresponding edges.
Let $x$ be a vertex of degree 1 of $G'$ (if vertices of degree 1 do not exist,  let $x$ the unique vertex of $G'$). Then at least two branches arrive at $x$, and say, let  $r_1$ and $r_2$ be two adjacent branches arriving at $x$, and let $\epsilon$ an arc of $\partial D$ that goes from the outermost region of $r_1$ to the one of $r_2$. 

This shows that there are two adjacent sets of parallel arcs of intersection in $D$, each containing an outermost arc. If the corresponding outermost arcs both have endpoints labeled $\alpha_1-\alpha_1$, 
then the arc $\epsilon$ must cross labels $1,2,\dots,n-1,,n,n,n-1,\dots,2,1$, and perhaps more labels between $n$ and $n$.  Any arc of intersection that leaves these labels corresponds to an edge of $r_1$ or $r_2$, by the selection of the branches. This implies that $r_1\cup r_2$ has at least $2n$ edges, and then at least one of the branches has $n$ or more edges, that is, correspond to $n$ parallel arcs.
Then there will be 3 parallel edges, which is not possible by Claim \ref{tunnelparallelarcs}, so we must have that $n\leq 2$. Similarly, if the  outermost arcs both have endpoints labeled $\alpha_n-\alpha_n$, or $\beta_m-\beta_m$, then there will be 3 parallel edges, unless $n\leq 2$ or $m \leq 2$. Remember that if an outermost arc have endpoints in $\alpha_1$ or $\alpha_n$, then $m=0$. If the corresponding outermost arcs have endpoints $\alpha_1-\alpha_1$ and $\alpha_n-\alpha_n$, Then there will be 3 parallel edges, unless $n\leq 4$. 
\end{proof}

\begin{claim} \label{morebounds} If $m\not= 0$ then $n=0$.
\end{claim}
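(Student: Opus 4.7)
I argue by contradiction: suppose both $m \neq 0$ and $n \neq 0$, aiming to contradict the minimality of $c(\widehat{S})$ by a refinement of the outermost-fork argument used in the proof of Claim~\ref{bounds}.

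First, I pin down the outermost arcs. By Claim~\ref{tunnelintersection}, every outermost arc of $D \cap \widetilde{S}$ in $D$ has both endpoints in one of $\alpha_1$, $\alpha_n$, or $\beta_m$. Cases 1 and 8 of that claim force $m = 0$ whenever the endpoints lie in $\alpha_1$ or $\alpha_n$, so the assumption $m \neq 0$ rules those cases out: every outermost arc of $D$ is of type $\beta_m$--$\beta_m$, and is governed by Case 9 of Claim~\ref{tunnelintersection}.

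Next I apply the outermost-fork argument with this rigid restriction. Build the dual tree $G$ as in Claim~\ref{bounds}. In the branching case, take two adjacent outermost branches $r_1, r_2$ whose outermost arcs are both $\beta_m$--$\beta_m$, meeting at a vertex of degree $\geq 3$, and let $\epsilon \subset \partial D$ be the arc joining their outermost regions along the non-outermost side of $\partial D$. The endpoints of $\epsilon$ lie on $\partial \beta_m$; on $\partial W$ this curve is flanked by a small annular region meeting $\partial V$ at the attaching circle of $\gamma_2$ and a large region running through $\partial \beta_{m-1}, \dots, \partial \beta_1$ and around $\partial N(\gamma_1)$ through $\partial \alpha_1, \dots, \partial \alpha_n$ in the symmetric cyclic pattern $1, 2, \dots, n, n, \dots, 2, 1$ familiar from Claim~\ref{bounds}. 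If $\epsilon$ takes the long route, then $n \neq 0$ guarantees at least $2n$ $\alpha$-labeled endpoints on $\epsilon$, and the pigeonhole argument of Claim~\ref{bounds} places many parallel $\alpha$-labeled arcs in one of $r_1, r_2$. Peeling off the (at most two, by Claim~\ref{tunnelparallelarcs}) consecutive $\beta$-labeled parallel arcs at the base of that branch, the next arc becomes outermost in a sub-disk of $D$ and is $\alpha$-labeled; Claim~\ref{tunnelparallelarcs} applied within that sub-disk produces three parallel arcs with all endpoints in the $\alpha_i$'s, a contradiction. If instead $\epsilon$ takes the short route through $\partial V$, the same peeling produces an outermost arc (in a sub-disk of $D$) with an endpoint on $s_1$ or $s_2$, contradicting Cases 1--2 of Claim~\ref{tunnelintersection}. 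The trajectory case, in which $G$ is a single path, is handled analogously by reading off labels outward from $\partial \beta_m$, using Claim~\ref{tunnelparallelarcs} to limit consecutive $\beta$-labeled parallel arcs to two and then invoking the same sub-disk peeling.

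The main obstacle is the peeling step: one must verify carefully that after discarding the outermost and, when present, second-outermost $\beta$-labeled arcs of a branch, the next innermost arc actually plays the role of an outermost arc for a sub-disk of $D$, so that the restrictions of Claims~\ref{tunnelintersection} and~\ref{tunnelparallelarcs} continue to apply. This hinges on the minimality of $c(\widehat{S})$, which prevents any intermediate modification from producing a lower-complexity 2-decomposing sphere.
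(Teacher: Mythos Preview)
Your proposal has two genuine gaps.

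First, the label pattern you quote for the $\alpha$'s is wrong in this situation. The palindromic sequence $1,2,\dots,n,n,\dots,2,1$ from Claim~\ref{bounds} arises there because, once an outermost arc lies in $\alpha_1$ or $\alpha_n$, one has $m=0$ and $\gamma$ can be slid to a single arc. Here you are assuming $m\neq 0$, so $\gamma_1$ remains a \emph{loop}; the meridian disks $\alpha_1,\dots,\alpha_n$ sit cyclically around $\partial N(\gamma_1)$, and $\epsilon$ meets them in the cyclic pattern $1,2,\dots,n,1,2,\dots,n,\dots$ (as written in the paper's own proof). This distinction is not cosmetic: with the cyclic pattern a parity/orientation argument forbids any arc from having both endpoints on the same $\alpha_i$, and the combinatorics that follows is entirely different from the palindromic case.

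Second, and more seriously, your ``peeling'' step is not justified. Claims~\ref{tunnelintersection} and~\ref{tunnelparallelarcs} are statements about arcs that are \emph{outermost in $D$}: their proofs use that the outermost disk $D'$ has interior disjoint from $\widetilde S$ and boundary arc $\eta\subset\partial W$, so that $D'$ can be glued to pieces of $\partial N(\gamma)$ or $\widehat S$ to build annuli, spheres, or compressing disks. If you remove a few outer $\beta$--labeled arcs from the picture, the next arc $\delta'$ does \emph{not} cut off a sub-disk of $D$ whose interior misses $\widetilde S$; the sub-disk it bounds on the outermost side still contains the arcs you discarded. There is no mechanism here by which minimality of $c(\widehat S)$ alone lets you pretend $\delta'$ is outermost; the annulus and sphere constructions in those claims simply do not go through for $\delta'$. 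Your final paragraph acknowledges this obstacle but does not resolve it.

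The paper's argument proceeds quite differently. From the cyclic $\alpha$--labeling and the orientation observation that no arc is $\alpha_i$--$\alpha_i$, one shows that either a branch contains enough parallel arcs to force two adjacent ones with labels $\lfloor n/2\rfloor$ and $\lceil (n+1)/2\rceil$, or the central region at the fork vertex is itself bounded by arcs all labeled $n/2,(n+1)/2,\dots$ in alternation. Either way one obtains a \emph{Scharlemann cycle}; attaching the corresponding face to $S$ together with the $1$--handle in $N(\gamma_1)$ between $\alpha_{n/2}$ and $\alpha_{(n+1)/2}$ produces a punctured lens space inside $S^3$, a contradiction. This Scharlemann--cycle step is the essential idea, and it is absent from your proposal.
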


\begin{proof}
Suppose $n\not= 0$. Any outermost arc must have both endpoints in $\beta_m$. 
Let $G$ and $G'$ be the graphs constructed in Claim \ref{bounds}. If $G$ is a trajectory then clearly $n=0$; so suppose it is not a trajectory and then $G'$ is non-empty. Let $x$ be a vertex of degree 1 of $G'$ (if vertices of degree 1 do not exist,  let $x$ the unique vertex of $G'$). Then at least two branches arrive at $x$, say $r_1$ and $r_2$ are two adjacent branches arriving at $x$, and let $\epsilon$ be an arc of $\partial D$ that goes from the outermost region of $r_1$ to the one of $r_2$. $\epsilon$ must cross labels $m,m-1,\dots,2,1,1,2,\dots,n-1,n,1,2,\dots,n-1,n,1,2,\dots,m$, where the sequence $1,2,\dots,n-1,n$ can be repeated several times. With a simple orientation argument, it can be shown that for a label $i\in \{ 1,2,\dots,n-1,n\}$, there can not be an arc with ends labeled $i-i$. If in one of the branches arriving at the vertex $x$, there are at least $m +1 +n/2$ parallel arcs, there there will be two parallel arcs with ends labeled $n/2$ and $(n+1)/2$. The disk bounded by these parallel arcs forms what is commonly called a Scharlemann cycle (\cite{Sch}). So, there will be a Scharlemann cycle formed by two parallel arcs, unless all the branches arriving at $x$ have exactly $m+n/2$ parallel arcs. In this case the region $F$ of $D$ corresponding to $x$, have arcs with endpoints labeled $n/2, (n+1)/2,n/2, (n+1)/2,\dots,n/2, (n+1)/2$ in this order, that is, it is a Scharlemann cycle. As usual, this implies the existence of a punctured lens space embedded in our ambient manifold. This is formed by taking a regular neighborhood $N(S\cup H \cup F)$, where $H$ is a 1-handle attached to $S$ which consists of  the part of $N(\gamma_1)$ bounded by $\alpha_{n/2}$ and $\alpha_{(n+1)/2}$. As this is impossible, we conclude that $n$ must be $0$. 
\end{proof}

\begin{claim} \label{nsmall} Suppose that $n\not=0$. Then $n\leq 2$.
\end{claim}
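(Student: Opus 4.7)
The plan is to continue the outermost fork argument from Claims \ref{bounds} and \ref{morebounds}, refining it with the help of Claim \ref{tunnelcircle}. By Claim \ref{morebounds} we have $m=0$, so by Claim \ref{tunnelintersection} every outermost arc in the compressing disk $D$ has both endpoints in $\alpha_1$ or both in $\alpha_n$. By Claim \ref{bounds} we also have $n \leq 4$. I would suppose for contradiction that $n \in \{3,4\}$ and split on the types of outermost arcs of $D$ that occur.

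First suppose every outermost arc of $D$ has its endpoints in the same disk, say $\alpha_1$. Then I would rerun the argument of Case A from the proof of Claim \ref{bounds}: the arc $\epsilon \subset \partial D$ between the outermost regions of two adjacent branches $r_1, r_2$ at a chosen vertex $x$ of $G'$ must cross the labels $1,2,\ldots,n,n,\ldots,2,1$, so that $r_1 \cup r_2$ contains at least $2n$ arcs and at least one of the two branches contains $\geq n$ parallel arcs with all endpoints in the $\alpha_i$'s. Since $n \geq 3$ this directly contradicts Claim \ref{tunnelparallelarcs}. The symmetric argument handles the case where every outermost arc has its endpoints in $\alpha_n$.

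Otherwise outermost arcs of both types occur (some with endpoints in $\alpha_1$ and some with endpoints in $\alpha_n$). The parity conclusion at the end of Claim \ref{tunnelcircle} then forces $n$ to be even, which together with $n \leq 4$ rules out $n=3$ and leaves only $n=4$. To eliminate this last possibility I would apply Claim \ref{tunnelcircle} to each of these two outermost arcs to obtain two annuli $A, A'$ with essential boundaries $c, c' \subset \widehat S$ and circles $w, w' \subset \partial V$ lying outside $B$, each isotopic to the core of a solid-torus component of $V - S$. Since $\widehat S = S \cap E(V)$ is an annulus, $c$ and $c'$ are necessarily parallel in $\widehat S$ and cobound a subannulus $F$. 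A cut-and-paste of $S$ analogous to the one in Claim \ref{tunnelparallelarcs}, using $A \cup A'$ together with annular or disk regions of $\partial V$ bounded by $w$ and $w'$, then produces a new essential $2$-decomposing sphere whose intersection with $\gamma$ is strictly smaller than $c(\widehat S)$, contradicting its minimality.

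The main obstacle is this last step, excluding $n=4$: the bound coming from Case C of Claim \ref{bounds} is only $n \leq 4$, so to push it down to $n \leq 2$ one must exploit the two annuli from Claim \ref{tunnelcircle} simultaneously, and then carefully verify that the resulting surgered sphere is still essential as a $2$-decomposing sphere and not collapsed to a trivial one.
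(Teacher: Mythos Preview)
Your handling of $n=3$ and of the case where all outermost arcs have the same label is fine and matches the paper's use of Claims \ref{bounds} and \ref{tunnelcircle}. The gap is in your elimination of $n=4$. The annuli $A,A'$ you extract from Claim \ref{tunnelcircle} have one boundary component on $\widehat S$ and the other on $\partial V$, so the ``cut-and-paste analogous to Claim \ref{tunnelparallelarcs}'' is not analogous at all: in Claim \ref{tunnelparallelarcs} one replaces an annulus $F\subset\widehat S$ by an annulus with the \emph{same} boundary in $\widehat S$, whereas here you would have to splice in a piece of $\partial V$ between $w$ and $w'$. You give no argument that the resulting surface is still a sphere meeting $V$ in two meridian disks, that it is essential (the curves $w,w'$ may be longitudes of different solid tori when $\Gamma$ is a handcuff graph, so there need be no annulus between them on $\partial V$; and even in the theta case the surgered sphere could become boundary-parallel), or that its complexity drops (the annuli $A,A'$ contain subarcs of $\gamma$). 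You acknowledge this as the main obstacle, but offer no mechanism to resolve it.

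The paper avoids this difficulty by using the \emph{second} arcs of the two parallel pairs, the ones labeled $\alpha_2$--$\alpha_2$ and $\alpha_3$--$\alpha_3$. Each pair of adjacent parallel arcs, together with two strips on $\partial N(\gamma)$, gives an annulus $A_2$ (resp.\ $A_3$) lying \emph{inside} $B$ with both boundary curves on $\widehat S$. Now the surgery is exactly that of Claim \ref{tunnelparallelarcs}: swap an annulus $F\subset\widehat S$ for $A_2$ (or for $A_3$). The resulting sphere is automatically a $2$-decomposing sphere, and it is essential unless $A_2$ (resp.\ $A_3$) follows the knotted arc of $\Gamma$ in $B$. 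If both surgeries fail, then $A_2$ and $A_3$ are parallel annuli following that arc; choosing $F'\subset F$ one sees that the torus $F\cup A_3$ can be isotoped off $\gamma$ and is incompressible in $W$, contradicting that $W$ is a handlebody. This fallback---an essential torus in $W$---is the key idea missing from your proposal.
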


\begin{proof}
By Claim \ref{bounds} we can assume that $n\leq 4$. Suppose first that $n=4$.
By Claim \ref{tunnelparallelarcs} there are two pairs of parallel edges in $D$, so that the ends of the edges of one of the pairs are labeled $\alpha_1-\alpha_1$ and $\alpha_2-\alpha_2$, and the ends of the other arcs are labeled  $\alpha_4-\alpha_4$ and $\alpha_3-\alpha_3$. As in the proof of Claim \ref{tunnelparallelarcs}, there are two annuli in $B$, determined by the pairs of parallel edges. One such annulus $A_2$ has as boundary the curves $c_1$ and $c_2$, which are essential in $\widehat S$. The other annulus $A_3$ has as boundary curves $c_3$, $c_4$, which are also essential in $\widehat S$. The curves $c_1$ and $c_2$ bound an annulus $F$ in $\widehat S$. By interchanging $F$ and $A_2$ we get a new 2-decomposing sphere, which will be essential unless $A_2$ is an annulus that follow the knotted arc of $\Gamma$ lying in $B$. Something similar can be said about the annulus $A_3$, where $c_3$ and $c_4$ bound an annulus $F'$ in $\widehat S$. So we get a new 2-decomposing sphere with lower complexity, unless $A_2$ and $A_3$ are parallel and follow the knotted arc lying in $B$. If this happens, then suppose, say, that $F' \subset F$. Then the torus $F\cup A_3$ can be isotoped to be disjoint from $\gamma$. But it is an essential torus in the complement of $V\cup N(\gamma)$, which is impossible for $W$ is a handlebody. 

Suppose now that $n=3$. In this case either there are 3 parallel edges, contradicting Claim \ref{tunnelcircle}, or there is an outermost arc with endpoints in $\alpha_1$, and another outermost arc with endpoints in $\alpha_3$. But this contradicts Claim \ref{tunnelcircle}. 
\end{proof}

\begin{claim} \label{msmall} If $m\not= 0$ then $m=1$.
\end{claim}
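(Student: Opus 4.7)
\noindent\emph{Plan.} Assume $m=2$ to derive a contradiction; this mirrors the $n=4$ case of Claim~\ref{nsmall}. By Claim~\ref{morebounds} we have $n=0$, and by Claim~\ref{bounds} we have $m\le 2$, so it suffices to rule out $m=2$.

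First, Claim~\ref{tunnelintersection} forces every outermost arc of $D\cap\widetilde S$ in $D$ to have both endpoints on $\beta_2$, and by Claim~\ref{tunnelparallelarcs} any family of parallel arcs in $D$ containing an outermost one has at most two members: the outermost arc has endpoints in $\beta_2$, while its parallel partner (if any) has endpoints in $\beta_1$. Running the outermost-fork argument from Claims~\ref{bounds} and~\ref{nsmall} on the dual tree $G$ of $D-\widetilde S$ and its reduction $G'$, one obtains a vertex $x$ of $G'$ at which adjacent branches $r_1, r_2$ meet, each carrying an outermost arc with endpoints in $\beta_2$. Hence each branch is either a single outermost arc or that outermost arc together with one parallel neighbor whose endpoints lie in $\beta_1$.

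Next, following Claim~\ref{tunnelparallelarcs} (for a parallel pair) or Claim~\ref{tunnelcircle} (for a lone outermost arc), each branch $r_i$ produces an annulus $A_i$ in $E(V)$ whose interior misses $\widehat S$. In the parallel-pair case, both boundary components of $A_i$ are essential curves on $\widehat S$; in the lone case, one component of $\partial A_i$ is essential on $\widehat S$ and the other is isotopic in $V$ to a circle of the spine $\Gamma$ lying outside the ball $B$ bounded by $S$. The rest of the argument mirrors the final paragraphs of Claim~\ref{nsmall}. Whenever $\partial A_i \subset \widehat S$, its two components cobound an annulus $F_i \subset \widehat S$, and swapping $F_i$ with $A_i$ in $S$ produces a new 2-decomposing sphere with strictly smaller complexity---contradicting minimality of $c(\widehat S)$---unless $A_i$ is parallel to $F_i$ through the knotted arc of $\Gamma$ in $B$. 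If both $A_1$ and $A_2$ must follow this knotted arc, they are parallel, and (with $F_2 \subset F_1$, say) the torus $F_1 \cup A_2$ can be isotoped off $\gamma$ into $W$, where it is an essential torus, contradicting that $W$ is a handlebody. The sub-cases in which one or both branches consist of a single outermost arc are handled analogously via Claim~\ref{tunnelcircle}: either a complexity reduction is produced directly by capping off with a meridian disk of $V$, or a punctured lens space appears as in Cases~7--9 of Claim~\ref{tunnelintersection}.

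The main obstacle I expect is the geometric bookkeeping in this last step. Unlike the $n=4$ case of Claim~\ref{nsmall}, where the two annuli naturally sit near two distinct subsegments of $\gamma_1$, here both $A_1$ and $A_2$ lie near the single segment of $\gamma_2$ between $\beta_1$ and $\beta_2$. So one must verify carefully that the two pairs of parallel arcs produce genuinely non-parallel annuli inside $B$ (so that the sphere swap really decreases complexity), and that any residual parallelism yields an essential torus in $W$ rather than a compressible one.
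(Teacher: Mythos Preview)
Your approach is essentially the same as the paper's, which is very terse: it just says that when $m=2$ there is a pair of parallel arcs with labels $\beta_2\!-\!\beta_2$ and $\beta_1\!-\!\beta_1$, producing an annulus in $B$, and then defers entirely to the proof of Claim~\ref{nsmall} for the dichotomy ``new 2-decomposing sphere with smaller complexity, or incompressible torus in $W$''. Your write-up simply unpacks that reference.

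One point where you overcomplicate: the ``single outermost arc'' sub-case does not arise, and your fallback for it (``capping off with a meridian disk of $V$, or a punctured lens space as in Cases~7--9'') is not the right mechanism. The fork argument you already invoked from Claim~\ref{bounds} settles it: with $m=2$ and $n=0$, the arc $\epsilon$ along $\partial D$ between the two adjacent outermost regions must cross the labels $2,1,1,2$, so $r_1\cup r_2$ carries at least $4$ arcs; since neither branch can carry $3$ parallel arcs (Claim~\ref{tunnelparallelarcs}), both carry exactly $2$. Also, $G$ cannot be a two-edge trajectory, since then the $\beta_1\!-\!\beta_1$ arc would itself be outermost, contradicting Claim~\ref{tunnelintersection}. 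Hence you really do get \emph{two} pairs of parallel arcs and hence two annuli in $B$, placing you exactly in the situation of the $n=4$ paragraph of Claim~\ref{nsmall}: either one of the swaps lowers complexity, or both annuli follow the knotted arc and the resulting torus is essential in $W$. So you can delete the last sentence of your second paragraph and the hedging in your final paragraph; the ``main obstacle'' you anticipated does not occur, because the two pairs of parallel arcs sit on \emph{opposite} sides of the segment of $\gamma_2$ between $\beta_1$ and $\beta_2$, exactly as the two pairs in Claim~\ref{nsmall} sit on opposite ends of $\gamma_1$.
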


\begin{proof}
By Claim \ref{bounds} suppose that $m=2$. Then there is a pair of parallel edges in $D$, one with labels $\beta_2-\beta_2$, and the other with labels $\beta_1-\beta_1$. As in the proof of Claim \ref{nsmall}, there is an annulus in $B$, which implies that there is another 2-decomposing sphere, or that there is an incompressible torus in $W$, which is not possible. 
\end{proof}

\begin{claim} \label{menoscasos} The cases $n=2$, $m=0$, or $m=1$, $n=0$, are not possible.
\end{claim}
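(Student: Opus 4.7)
The strategy is to rule out both cases by exhibiting a $2$-decomposing sphere $\Sigma$ with $c(\widehat \Sigma) < c(\widehat S)$, contradicting the minimality of $c(\widehat S)$. The construction follows the annulus-surgery template from Claims \ref{tunnelparallelarcs}, \ref{nsmall}, and \ref{msmall}: an outermost arc of $D \cap \widetilde S$ yields an annulus $A \subset E(V)$ with one boundary an essential curve on $\widehat S$ and the other parallel to a cycle of the spine $\Gamma$, and surgering $S$ along $A$ produces the required $\Sigma$.

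For $m = 1$, $n = 0$, the tunnel is slid with $\gamma_1$ a loop disjoint from $\widehat S$ and $\gamma_2 \cap \widehat S = \beta_1$. By Claim \ref{tunnelintersection} every outermost arc $\delta$ of $D \cap \widetilde S$ has both endpoints in $\beta_1$, with its companion arc $\eta$ meeting $\partial V$ essentially, and by Claim \ref{tunnelcircle} this produces an annulus $A$ outside the $3$-ball $B$ from an essential curve $c_1 \subset \widehat S$ to the core $w$ of a solid-torus piece $V' \subset V - S$. The plan is to surger $S$ along $A$ using the disk of $S$ bounded by $c_1$ and lying in $V'$, producing a new $2$-decomposing sphere $\Sigma$ that misses $\beta_1$ entirely, so that $c(\widehat \Sigma) = 0 < c(\widehat S)$, contradicting minimality.

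For $n = 2$, $m = 0$, the arc $\gamma$ meets $\widehat S$ transversally at disks $\alpha_1$ and $\alpha_2$. By Claims \ref{tunnelintersection} and \ref{tunnelparallelarcs}, every outermost arc has both endpoints in $\alpha_1$ or in $\alpha_2$, and no three parallel arcs exist. If outermost arcs of both types coexist, I adapt the $n = 4$ argument from Claim \ref{nsmall}: Claim \ref{tunnelcircle} furnishes two annuli $A_1, A_2$ outside $B$ connecting essential curves $c_1, c_2 \subset \widehat S$ to cores of solid torus pieces of $V - S$, the curves $c_1, c_2$ bound a subannulus $F \subset \widehat S$, and replacing $F$ by $A_1$ and $A_2$ (joined through the solid torus pieces) produces a $2$-decomposing sphere $\Sigma$ with $c(\widehat \Sigma) = 0$. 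If instead every outermost arc has endpoints in a single $\alpha_i$, the tree argument from Claim \ref{bounds} applied to $D - \widetilde S$ forces every arc of $D \cap \widetilde S$ to have both endpoints in that same $\alpha_i$, and then a single annulus from Claim \ref{tunnelcircle} surgers $S$ to eliminate $\alpha_i$, again violating minimality.

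The main obstacle will be the exceptional configurations in which the constructed annulus $A$ (or one of the pair $A_1, A_2$) is parallel on $\partial V'$ to an annulus that follows the knotted arc of $\Gamma$ inside $B$, so that the surgery fails to reduce $c(\widehat S)$. As in the $n = 4$ argument of Claim \ref{nsmall}, I expect to dispose of these subcases by showing that the parallelism assembles into an essential torus in $W = S^3 - \text{int}\, N(V \cup \gamma)$, which is impossible since $W$ is a genus-$3$ handlebody.
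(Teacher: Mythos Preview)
Your surgery template does not apply here, and this is the genuine gap. The annulus $A$ supplied by Claim \ref{tunnelcircle} runs from an essential curve $c_1\subset\widehat S$ to a curve $c_0\subset\partial V$ which, by the very conclusion of that claim, is a \emph{longitude} of a solid-torus piece $V'\subset V$ (parallel to the core $w$), not a meridian. The meridian case is exactly what the proof of Claim \ref{tunnelcircle} already eliminated by the cut-and-paste you describe; what survives is the case where $c_0$ bounds no disk in $V$. Hence in the situation $m=1$, $n=0$ there is no ``disk of $S$ bounded by $c_1$ and lying in $V'$\,'' with which to cap off, and your surgered surface is not a sphere meeting $V$ in two disks. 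The same obstruction defeats the $n=2$ construction ``$A_1$ and $A_2$ joined through the solid torus pieces'': the curves $w_1,w_2$ are longitudes, so no disks are available, and if instead you connect $w_1$ to $w_2$ by an annulus on $\partial V$ (possible only in the theta-curve case) you obtain a sphere meeting $V$ in an annulus, not a $2$-decomposing sphere. Your essential-torus fallback from Claim \ref{nsmall} does not rescue this either: that argument used two annuli lying \emph{inside} $B$ with both boundaries on $\widehat S$, whereas here the annuli from Claim \ref{tunnelcircle} lie outside $B$ and have one boundary on $\partial V$.

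The paper's argument is of a different nature and does not produce a new $2$-decomposing sphere at all. It glues each $A_i$ to a sub-annulus of $\widehat S$ to obtain properly embedded annuli $C_i'$ in the handlebody $W$ (two of them when $\Gamma$ is a theta-curve, three when $\Gamma$ is a handcuff graph), pushed off $\gamma$ and off $B$. Because $W$ is a handlebody these annuli are $\partial$-compressible, and one locates a $\partial$-compression disk $E$ disjoint from the others. A case analysis on which side of the $C_i'$ the disk $E$ lies shows either that $\partial V$ compresses in $E(V)$, contradicting irreducibility of $V$, or that $\gamma$ is isotopic into one of the annuli and hence off $\widehat S$, contradicting $c(\widehat S)>0$. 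You will need this $\partial$-compression mechanism (or something equivalent) rather than sphere surgery.
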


\begin{proof}
Suppose first that $n=2$ and then $m=0$. If there is a pair of parallel edges in $D$, one with labels $\alpha_1-\alpha_1$, and the other with labels $\alpha_2-\alpha_2$, then proceed as in Claim \ref{msmall}, to show that this is not possible. Suppose then that there is an outermost arc with labels  $\alpha_1-\alpha_1$ and another one with labels  $\alpha_2-\alpha_2$. Then by Claim \ref{tunnelcircle} there are two annuli $A_1$, $A_2$ outside the 3-ball $B$, so that $\partial A_i=c_i\cup w_i$, where $c_i $ is an essential curve embedded in  $\widehat S$, and $w_i$ is a curve on $\partial V$, for $i=1,2$. Note that $c_1\not= c_2$, but that $w_1$ and $w_2$ could be isotopic curves in $\partial V$, in fact, they will be isotopic curves if and only if $\Gamma$ is a theta-curve. Note that the subarc of $\gamma$ going from $\partial V$ to $\alpha_1$ ($\alpha_2$) is contained in the annulus $A_1$ ($A_2$). The curves $c_1$, $c_2$ divide $\widehat S$ into 3 annuli. Let $C_1$ ($C_2$) be the annulus whose boundary consists of the curve $c_1$ ($c_2$) and a component of $\partial \widehat S$ (the other component of $\partial \widehat S$) and let $C_3$ be the annulus bounded by $c_1$ and $c_2$.

Suppose first that $\Gamma$ is a theta-curve. Let $C_1'=C_1\cup A_1$ and $C_2'=C_2\cup A_2$, and push them to be disjoint from the tunnel $\gamma$ and from $B$. Note that $C_1'$ and $C_2'$ are disjoint annuli properly embedded in the handlebody $W$, so they are $\partial$-compressible. Note that there is a $\partial$-compression disk for one of the annuli which is disjoint from the other one. Say, there is a disk $E$ contained in $W$, so that $\partial E=\nu\cup \mu$, where $\nu$ is an arc on $\partial W$, $\mu$ is a spanning arc in $C_1'$, and $E$ is disjoint from $C_2'$. Note that $C_1' \cup C_2'$ divides $W$ into two handlebodies $W_1$ and $W_2$, where, say, $B\cap W$ is contained in $W_1$. Note that $E$ must be contained in $W_2$, for otherwise the arc $\nu$  would intersect both components of $\widehat S$, i.e. it would intersect $C_2'$. By using $E$, we may assume that there is an arc of $\Gamma \cap W_2$ that is isotopic to the arc $\mu$ on $E$, and by taking a neighborhood of $C_1'$ that contains $E$, it is not difficult to see that there is a disk $F$ in $W_2$, whose boundary is an essential curve on $V$,  i.e., $V$ would be reducible, which is not possible.

Suppose now that $\Gamma$ is a handcuff graph. Let $C_1'$ and $C_2'$ be defined as above. Consider the annulus $C_3'=A_1 \cup C_3 \cup A_2$, and push it  to be disjoint from the tunnel $\gamma$, from $B$ and from $C_1'\cup C_2'$. Note that $C_1' \cup C_2' \cup C_3'$ divides $W$ into two handlebodies $W_1$ and $W_2$, where, say, $B\cap W$ is contained in $W_1$. There is a compression disk $E$ for one of the annuli which is disjoint form the other two annuli. If $E$ is contained in $W_1$, it will be a $\partial$-compression disk for $C_3'$ but it would imply that the tunnel $\gamma$ is isotopic to an arc on $C_3'$, i.e., it would be disjoint from $\widehat S$. If $E$ is contained in $W_2$, then it would be a $\partial$-compression disk for $C_1'$ or $C_2'$, and as before, this will imply that $V$ is reducible.

Suppose now that $m=1$ and $n=0$.  There is an outermost arc in $D$ with labels  $\beta_1-\beta_1$. By Claim \ref{tunnelcircle} there is one annulus $A_1$ outside the 3-ball $B$, so that $\partial A_1=c_1\cup w_1$, where $c_1 $ is an essential curve embedded in  $\widehat S$, and $w_1$ is a curve on $\partial V$. Note that the subarc of $\gamma$ going from $\partial V$ to $\alpha_1$ is contained in the annulus $A_1$. The curve $c_1$ divides $\widehat S$ into 2 annuli $C_1$  and $C_2$. Let $C_1'=C_1\cup A_1$ and $C_2'=C_2\cup A_1$, and push them to be disjoint from the tunnel $\gamma$ and from $B$. Note that $C_1'$ and $C_2'$ are disjoint annuli properly embedded in the handlebody $W$, so they are $\partial$-compressible. An argument as in the previous cases shows that this is not possible.
\end{proof}

So we conclude that if $S$ is not disjoint from $\gamma$, then it intersects it once, and $\gamma$ is just one arc. This completes the proof of Proposition \ref{mainpro}.

\section{Conclusion}

In this section, we characterize composite tunnel number one genus two handlebody-knots.
Recall that $V$ is a tunnel number one genus two handlebody-knot in $S^3$ whose exterior is boundary-irreducible, $\gamma$ is an unknotting tunnel for $V$, and $S$ is a 2-decomposing sphere for $V$ which intersects $\gamma$ in at most one point by the previous section.
$S$ bounds a 3-ball $B$ such that a spine $\Gamma$ of $V$ intersects $B$ in a knotted arc $k$.

There are four cases to consider:

\begin{enumerate}
\item $S\cap \gamma=\emptyset$, and $\Gamma-k$ is connected.
\item $S\cap \gamma=\emptyset$, and $\Gamma-k$ is not connected.
\item $|S\cap \gamma|=1$, and $\Gamma-k$ is connected.
\item $|S\cap \gamma|=1$, and $\Gamma-k$ is not connected.
\end{enumerate}

Note that in Cases (1) and (2), the tunnel $\gamma$ lies on the 3-ball $B$. $N(\Gamma\cup \gamma)$ is a genus 3 handlebody and let $W=S^3-\text{int} N(\Gamma\cup \gamma)$ be the complementary genus 3 handlebody.

In Case (1), we take a spine $\Gamma$ of $V$ as a theta-curve graph.
$\Gamma$ is decomposed into a theta-curve graph $\theta$ and a knot $K$. $N(\Gamma\cup \gamma)$ is decomposed by two disks of $S\cap N(\Gamma\cup \gamma)$ into two solid tori $V_1$ and $V_2$.
The handlebody $W$ is decomposed by the separating annulus $\widehat S =S\cap W$ into two genus 2 handlebodies $W_1$ and $W_2$ (Figure \ref{case1}).

Since $W_1$ is a handlebody, $\theta$ has a tunnel number 0.

Let $E$ be a boundary-compressing disk for the separating annulus $\widehat S$ in $W$.
Note that $E \subset W_2$.
We consider a torus $T$ obtained from $\partial V_2$ by isotoping it into $int V_2$ slightly.
Then $T$ bounds a solid torus $X$ in $V_2$ and $k\cap X$ is an unknotted arc in $X$.
If we attach a pair $(B',t)$ of a 3-ball and an unknotted arc to $(B,k)$, then we have a pair of the 3-sphere and a knot $K$.
Since $W_2$ is a genus 2 handlebody and $E$ cuts $W_2$ into a solid torus, $T$ bounds a solid torus $Y=S^3-\text{int} X$. The arc $t$ is isotopic to an spanning arc of the annulus $\widehat S$, which is in turn, by isotoping it through $E$, isotopic to an arc lying  on $T$. Hence $K$ is a $(1,1)$-knot, for $K\cap X=k\cap X$ is unknotted in $T$ and $K\cap Y$ is unknotted in $Y$.

\begin{figure}[htbp]
\begin{center}
\includegraphics[trim=0mm 0mm 0mm 0mm, width=.5\linewidth]{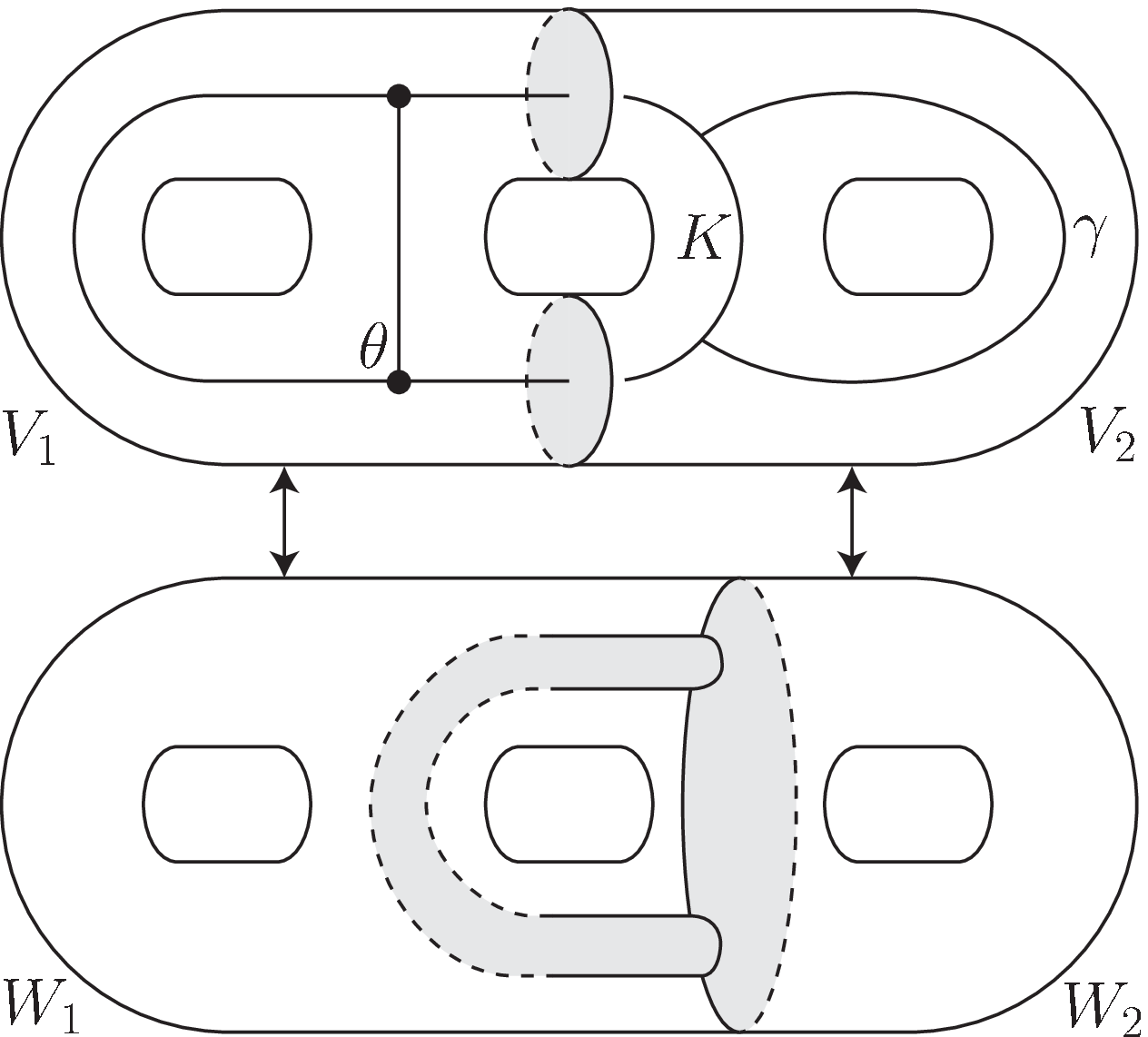}
\end{center}
\caption{Case (1)}
\label{case1}
\end{figure}

In Case (2), we take a spine $\Gamma$ of $V$ as a handcuff graph.
$\Gamma$ is decomposed into a handcuff graph $\phi$ and a knot $k$. $N(\Gamma\cup \gamma)$ is decomposed by two disks of $S\cap N(\Gamma\cup \gamma)$ into three solid tori $V_1$, $V_2$ and $V_3$.
The handlebody $W$ is decomposed by the separating annulus $\widehat S =S\cap W$ into two genus 2 handlebodies $W_1$ and $W_2$ (Figure \ref{case2}).
Similarly to Case 1, $\phi$ has a tunnel number 0 and $k$ is a $(1,1)$-knot.

\begin{figure}[htbp]
\begin{center}
\includegraphics[trim=0mm 0mm 0mm 0mm, width=.5\linewidth]{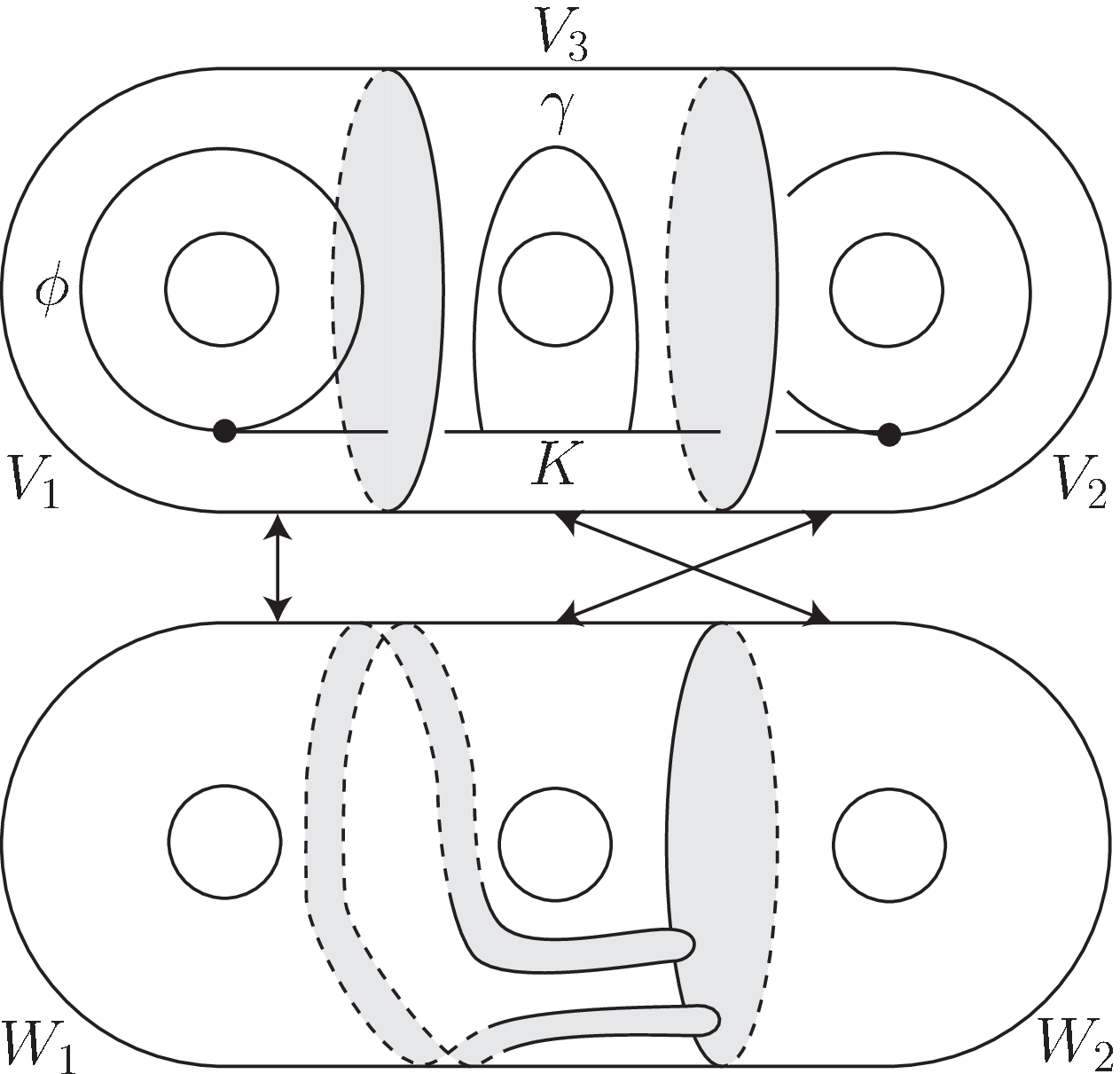}
\end{center}
\caption{Case (2)}
\label{case2}
\end{figure}

In Case (3), we take a spine $\Gamma$ of $V$ as a theta-curve graph.
$\Gamma$ is decomposed into a theta-curve graph $\theta$ and a knot $k$.
The 2-decomposing sphere $S$ for $V$ intersects $N(\Gamma\cup \gamma)$ in 3 disks one of which intersects $\gamma$ in a single point, which divide $V$ into a solid torus  and a 3-ball. Note that $\widetilde S=S\cap W$ is a pair of pants.

Note that by Claim \ref{tunnelintersection} there is a boundary-compressing disk $E$ for $\widetilde S$ in $W$, with interior disjoint from $B$ by Claim  \ref{tunnelcircle}, and so that  $E\cap \widehat S$ is an arc with both endpoints in the disk of intersection of $S$ with $N(\gamma)$. By isotoping $S$ along $E$, $S$ intersects $N(\Gamma\cup \gamma)$ in 2 disks and one annulus $A_1$ which intersects $\gamma$ in a single point, and $S$ intersects $W$ in two annuli $A_2$, $A_3$. $N(\Gamma\cup \gamma)$ is decomposed by two disks of $S\cap N(\Gamma\cup \gamma)$ and the annulus $A_1$ into two solid tori $V_1$ and $V_2$.
It follows from Claim \ref{tunnelcircle} that the core of $A_1$ is parallel to a cycle $w$ of $\theta-k$ in $V_1$. The handlebody $W$ is decomposed by two non-separating annuli $A_2$, $A_3$ into two genus 2 handlebodies $W_1$ and $W_2$ (Figure \ref{case3}).

\begin{figure}[htbp]
\begin{center}
\includegraphics[trim=0mm 0mm 0mm 0mm, width=.5\linewidth]{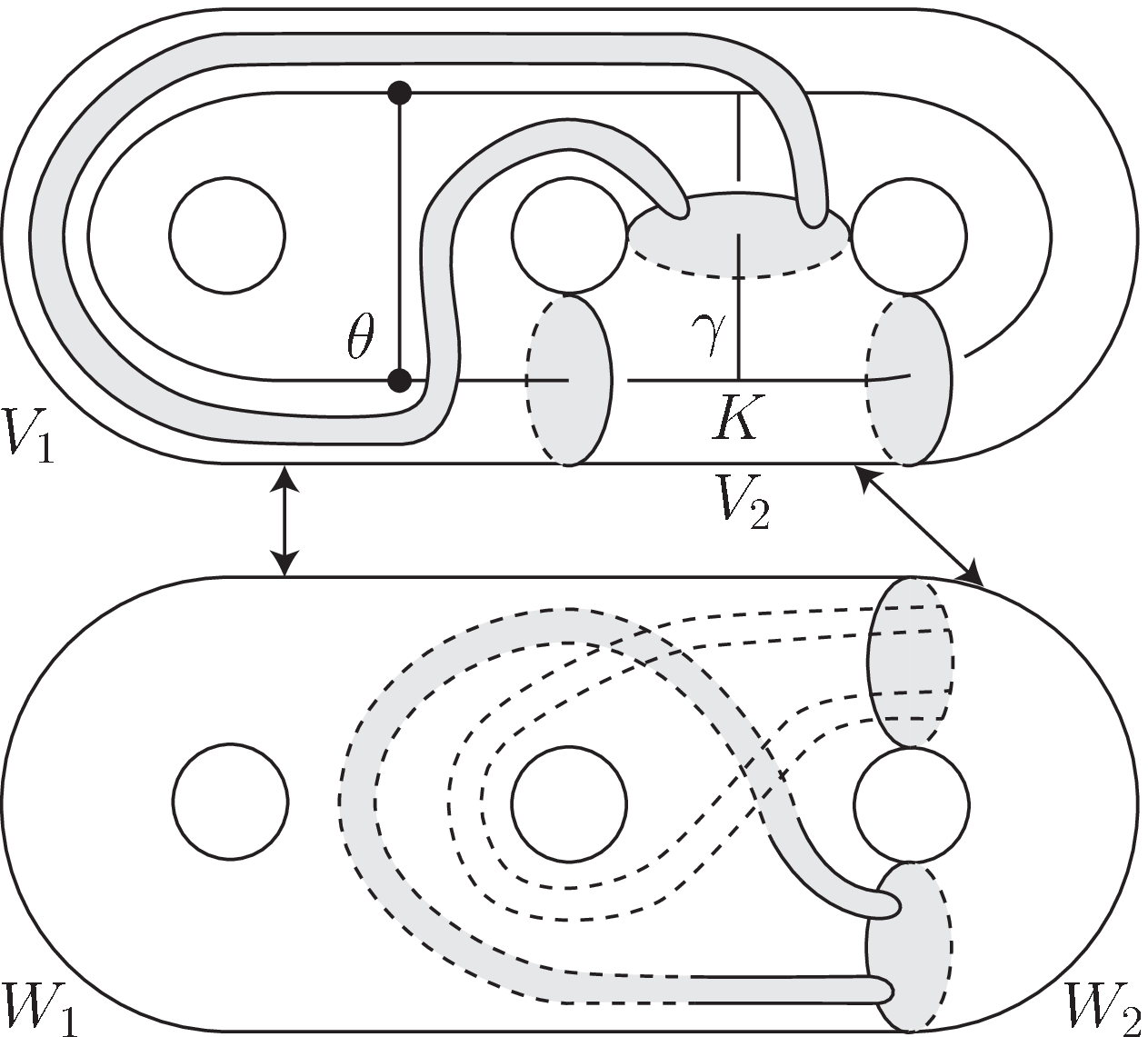}
\end{center}
\caption{Case (3)}
\label{case3}
\end{figure}

Now we show that $k$ is a 2-bridge knot.
We note that $k$ is parallel to an arc on $\partial V_2$, $A_1$ is boundary-compressible, and $A_2$, $A_3$ are also boundary-compressible in $W_2$.
This shows that $k$ has a 2-bridge decomposition as Figure \ref{2-bridge}.

\begin{figure}[htbp]
\begin{center}
\includegraphics[trim=0mm 0mm 0mm 0mm, width=.7\linewidth]{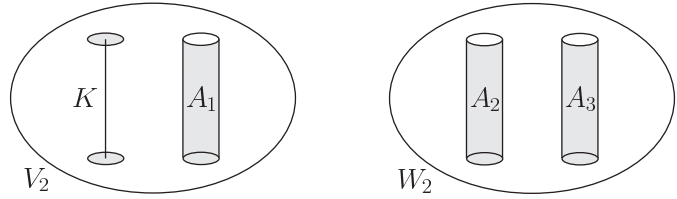}
\end{center}
\caption{A 2-bridge decomposition of $K$}
\label{2-bridge}
\end{figure}

Since the cycle of $\theta-k$ is parallel to the core of $A_1$, the cycle can be put on $S$.
Let $S'$ be a 2-sphere which is obtained from $S$ by putting the cycle on it.
Then we take another 2-sphere $P$ which is parallel to $S'$ and bounds a 2-string tangle.
Since $W_1$ is a genus 2 handlebody, the 2-string tangle is free.
Moreover, if the tangle is not essential, say, it is a rational tangle,
then the handlebody-knot is reducible, this can be seen by untwisting the rational tangle
around the loop $w$.
The complementary tangle is a Hopf tangle with two edges connected sum with a 2-bridge knot.

In Case (4), we take a spine $\Gamma$ of $V$ as a handcuff graph.
$\Gamma$ is decomposed into a theta-curve graph $\phi$ and a knot $k$.
The 2-decomposing sphere $S$ for $V$ intersects the genus 3 handlebody $N(\Gamma\cup \gamma)$ in 3 disks one of which intersects $\gamma$ in a single point, and $\widetilde S=S\cap W$ is a pair of pants.

Note that by Claim \ref{tunnelintersection} there is a boundary-compressing disk $E$ for $\widetilde S$ in $W$, with interior disjoint from $B$ by Claim  \ref{tunnelcircle}, and so that  $E\cap \widehat S$ is an arc with both endpoints in the disk of intersection of $S$ with $N(\gamma)$. By isotoping $S$ along $E$, $S$ intersects $N(\Gamma\cup \gamma)$ in 2 disks and one annulus $A_1$ which intersects $\gamma$ in a single point, and $S$ intersects $W$ in two annuli $A_2$, $A_3$.

The handlebody $N(\Gamma\cup \gamma)$ is decomposed by two disks of $S\cap N(\Gamma\cup \gamma)$ and the annulus $A_1$ into three solid tori $V_1$, $V_2$ and $V_3$. It follows from Claim \ref{tunnelcircle} that the core of $A_1$ is parallel to a cycle of $\phi-k$ in $V_1$. The handlebody $W$ is decomposed by two non-separating annuli $A_2$, $A_3$ into two genus 2 handlebodies $W_1$ and $W_2$ (Figure \ref{case4}).
Similarly to Case 3, $k$ is a 2-bridge knot, and $\phi$ is decomposed by a 2-sphere $P$ into a free tangle and a Hopf tangle with one edge.
If the free tangle is not essential, then there is a compression disk for $P$, which separates
the arc and the other circle of the graph, which then implies that the circle bounds a disk,
i.e., again the handlebody-knot is reducible.

\begin{figure}[htbp]
\begin{center}
\includegraphics[trim=0mm 0mm 0mm 0mm, width=.5\linewidth]{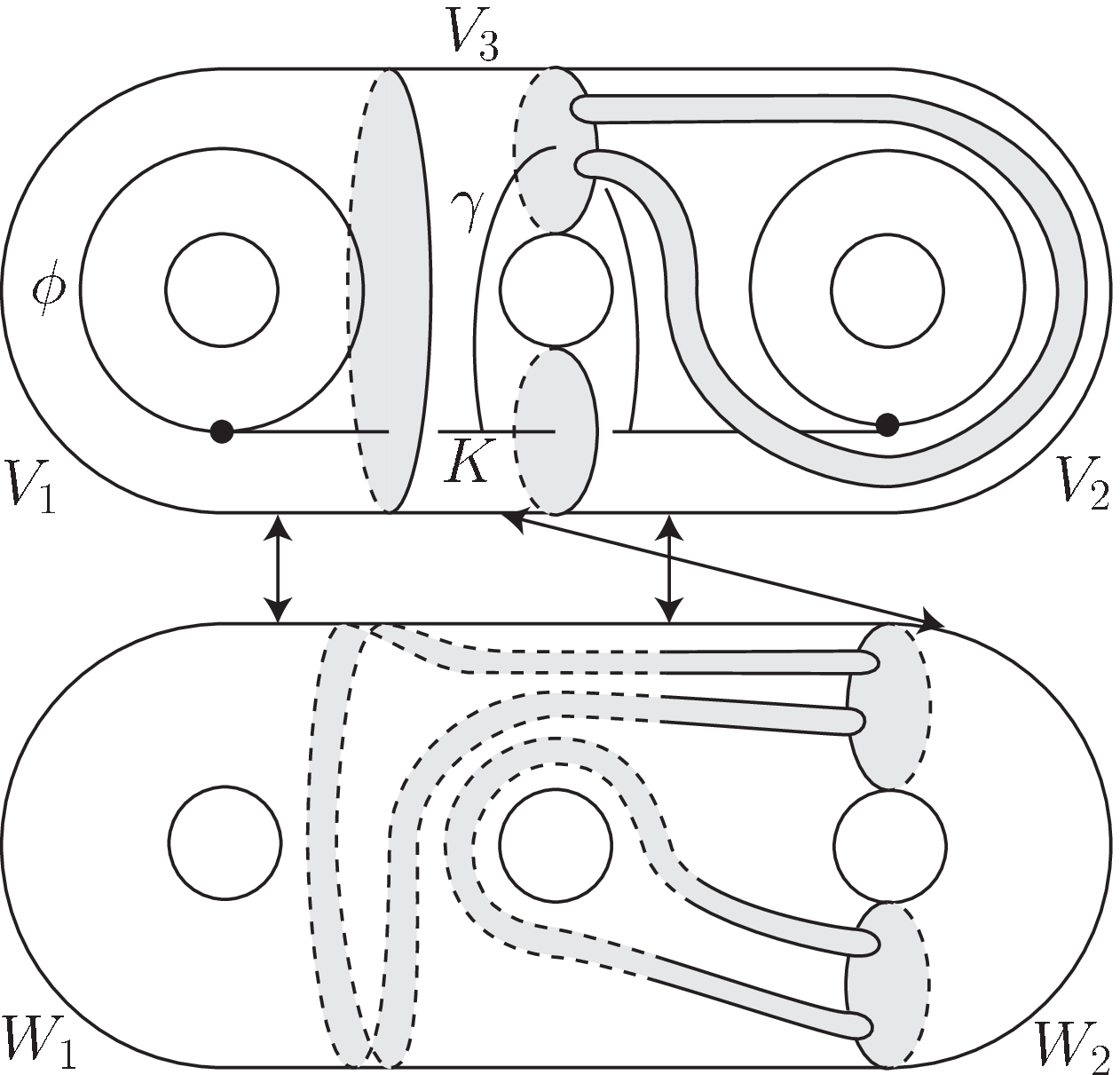}
\end{center}
\caption{Case (4)}
\label{case4}
\end{figure}

\bibliographystyle{amsplain}

\end{document}